\tikzstyle{vertex}=[circle,draw=black,fill=black,inner sep=0,minimum size=5pt,text=white,font=\footnotesize]
\tikzset{
	on each segment/.style={
		decorate,
		decoration={
			show path construction,
			moveto code={},
			lineto code={
				\path [#1]
				(\tikzinputsegmentfirst) -- (\tikzinputsegmentlast);
			},
			curveto code={
				\path [#1] (\tikzinputsegmentfirst)
				.. controls
				(\tikzinputsegmentsupporta) and (\tikzinputsegmentsupportb)
				..
				(\tikzinputsegmentlast);
			},
			closepath code={
				\path [#1]
				(\tikzinputsegmentfirst) -- (\tikzinputsegmentlast);
			},
		},
	},
}
\renewenvironment{proof}[1][\proofname] {\par\pushQED{\qed}\normalfont\topsep6\p@\@plus6\p@\relax\trivlist\item[\hskip\labelsep\bfseries#1\@addpunct{.}]\ignorespaces}{\popQED\endtrivlist\@endpefalse}
\newtheorem{theorem}{\bf Theorem}[section]
\newtheorem{lemma}[theorem]{\bf Lemma}
\newtheorem{conjecture}[theorem]{\bf Conjecture}
\theoremstyle{definition}
\def\eps{\varepsilon}
\title{Long directed paths in Eulerian digraphs}
\author{Oliver Janzer\thanks{Department of Mathematics, ETH Z\"urich, Switzerland.\newline 
E-mail: {\tt \{oliver.janzer,benjamin.sudakov,istvan.tomon\}@math.ethz.ch}.}
\and
Benny Sudakov\footnotemark[1]
\and
Istv\'an Tomon\footnotemark[1]}
\date{}
\begin{document}

\maketitle

\begin{abstract}
    An old conjecture of Bollob\'as and Scott asserts that every Eulerian directed graph with average degree $d$ contains a directed cycle of length at least $\Omega(d)$. The best known lower bound for this problem is $\Omega(d^{1/2})$ by Huang, Ma, Shapira, Sudakov and Yuster. They asked whether this estimate can be improved at least for directed paths instead of cycles and whether one can find a long path starting from any vertex if the host digraph is connected. In this paper we break the $\sqrt{d}$ barrier, showing  how to find a path of length $\Omega(d^{1/2+1/40})$ from any vertex of a connected Eulerian digraph.
\end{abstract}

\section{Introduction}

Determining the maximum possible number of edges in a graph which does not contain certain graphs as a subgraph is a central problem in Extremal Combinatorics. In the case where a path of fixed length is forbidden, this problem is well understood. Let us write $P_k$ for the path with $k$ edges. A well-known result of Erd\H os and Gallai \cite{EG59} states that any graph on $n$ vertices with average degree more than $k-1$ contains a copy of $P_k$. This is tight when $k-1$ divides $n$ since the disjoint union of $\frac{n}{k-1}$ cliques of size $k-1$ does not contain a path of length $k$. In the same paper it was also shown that any $n$-vertex graph with more than  $\frac{1}{2}(k-1)(n-1)$ edges contains a cycle of length at least $k$. This is tight when $k-2$ divides $n-1$ since the union of $\frac{n-1}{k-2}$ cliques of size $k-1$ sharing a single vertex does not contain a cycle of length at least $k$. The maximum number of edges in an $n$-vertex graph without a path of length $k$ was determined for every $n,k$ by Faudree and Schelp \cite{FS75} and, independently, by Kopylov \cite{Ko77}. The maximum possible number of edges in an $n$-vertex graph without a cycle of length at least $k$ was determined for all $n,k$ independently by Woodall~\cite{Wo76} and Kopylov~\cite{Ko77}.

In this paper we investigate how long a path there must exist in a directed graph (digraph) with a given number of edges. The edges of our directed graphs are ordered pairs of vertices. We use the convention that loops and multiple edges are not allowed, but we permit the possibility of having both $(u,v)$ and $(v,u)$ as an edge. We write $uv$ as a shorthand for $(u,v)$.

Observe that, very much unlike in the graph case, a directed path of length at least two need not exist even when our host graph is a dense directed graph. Indeed, if $G$ is the digraph whose vertex set is $A\cup B$ and the edges of $G$ are $uv$ for all $u\in A,v\in B$, then $G$ has no directed path of length at least two. Hence, in order to be able to find longer paths, one needs to make further assumptions about the directed graph. One natural assumption that takes care of the above example is that the indegree and the outdegree agree at every vertex. Write $d^+(v)$ and $d^-(v)$ for the outdegree and indegree of a vertex $v$, respectively. If the digraph $G$ satisfies $d^+(v)=d^-(v)$ for every vertex $v\in V(G)$, we call $G$ Eulerian. The degree of $v$ is then defined to be $d^+(v)=d^-(v)$. Note that, somewhat unconventionally, we do not require an Eulerian directed graph to be connected.

Twenty-five years ago, Bollob\'as and Scott proposed the following intriguing question about directed cycles.

\begin{conjecture}[Bollob\'as--Scott \cite{BS96}] \label{con:longcycle}
    Let $G$ be an Eulerian directed graph with average degree at least $d$. Then $G$ contains a directed cycle of length at least $cd$ for some absolute constant $c>0$.
\end{conjecture}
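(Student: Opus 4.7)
The plan is to exploit two standard features of Eulerian digraphs: the ability to pass to a subdigraph of large minimum degree, and the decomposition of the edge set into edge-disjoint directed cycles. First I would iteratively delete every vertex of degree less than $d/4$; at each deletion at most $d/2$ edges disappear and the Eulerian property is preserved, so starting from $\Omega(dn)$ edges the process terminates with a nonempty Eulerian subdigraph $G'$ of minimum degree $\Omega(d)$. It then suffices to produce a directed cycle of length $\Omega(d)$ in $G'$.

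Next I would invoke the edge decomposition $E(G') = C_1 \sqcup \dots \sqcup C_k$ into edge-disjoint directed cycles, guaranteed by the Eulerian property. If some $C_i$ already has length $\Omega(d)$ we are done, so assume every $C_i$ is short. Then $\sum_i |C_i| = e(G') = \Omega(d\,|V(G')|)$ forces many short cycles, and typical vertices lie on many of them. The natural strategy is to splice: starting with one cycle through a vertex $v$, whenever the running closed walk reaches a vertex $u$ with an unused outgoing edge belonging to some other $C_j$, insert $C_j$ into the walk. The Eulerian property keeps in- and out-degrees balanced along the walk, so the procedure only terminates after producing a long closed walk in $G'$.

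The decisive obstacle --- and the reason Conjecture \ref{con:longcycle} has resisted attack for twenty-five years --- is the passage from a long closed walk to a long \emph{simple} directed cycle. The naive pigeonhole extraction gives only length $\Omega(\sqrt{d})$: in a closed walk of length $L$ through $n$ distinct vertices, either some vertex is revisited $\Omega(L/n)$ times (supplying that many edge-disjoint cycles through it) or $n$ itself is $\Omega(\sqrt{L})$ (so some stretch between consecutive repeats is already a long simple cycle), and balancing yields $\sqrt{L}$. This is precisely the $\sqrt{d}$ barrier of Huang, Ma, Shapira, Sudakov and Yuster. To push past it one would need a genuinely new mechanism --- perhaps a density-increment argument locating a subdigraph where the cycle decomposition is dramatically more efficient than average, or a spectral/mixing argument exploiting the fact that an Eulerian digraph with no long cycle must fail expansion quantitatively. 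I see no clean route to the full $\Omega(d)$ bound, and indeed the main theorem of the present paper improves on $\sqrt{d}$ only for \emph{paths} (starting at a chosen vertex of a connected host) and only by a factor of $d^{1/40}$, which already requires substantial new ideas.
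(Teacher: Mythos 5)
You have correctly recognized that this statement is not a theorem of the paper at all: it is the Bollob\'as--Scott conjecture, which the authors explicitly describe as ``still wide open,'' and the paper offers no proof of it (its main result concerns paths and only reaches exponent $1/2+1/40$). So there is no ``paper's proof'' to compare against, and your proposal, by its own admission, is not a proof either.

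Your diagnosis of where the argument breaks is accurate and matches the state of the art. The reduction to an Eulerian subdigraph of minimum degree $\Omega(d)$ is fine (this is essentially the route of Huang, Ma, Shapira, Sudakov and Yuster), and the cycle decomposition plus splicing does yield a long closed walk. The genuine gap is exactly the one you name: extracting a long \emph{simple} directed cycle from a long closed walk. Pigeonhole on revisited vertices gives only $\Omega(\sqrt{d})$, which is why $\Omega(d^{1/2})$ remains the best known bound for cycles; the improvement of Knierim, Larcher, Martinsson and Noever to $\Omega(d/\log\Delta)$ requires a bounded-maximum-degree hypothesis that does not hold in general. Since you do not supply the missing mechanism, the proposal should be read as a (correct) account of why the conjecture is open rather than as a proof, and it would be a serious error to present it as establishing the stated conclusion.
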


This conjecture is still wide open. The best known lower bound is $\Omega(d^{1/2})$, proved by Huang, Ma, Shapira, Sudakov and Yuster \cite{HMSSY}. They deduce this bound from the related problem of finding an Eulerian subgraph with large minimum degree. Since the bound in \cite{HMSSY} for this latter problem is tight, $\Omega(d^{1/2})$ is the best estimate one can get by their approach.  In the case we also assume that the maximum degree of $G$, denoted by $\Delta(G)$, is not too large, this is improved by Knierim, Larcher, Martinsson and Noever \cite{KLMN19}, who prove that $G$ contains a directed cycle of length $\Omega(d/\log \Delta(G))$. However, $\Delta(G)$ can be arbitrarily large compared to $d$ (even linear in the number of vertices $n$). Hence this result does not seem to imply any bound as a function of $d$ alone.

In \cite{HMSSY}, it is noted that the relaxation of the conjecture of Bollob\'as and Scott, in which we look for a directed path instead of a cycle, is open as well, with the best known lower bound still $\Omega(d^{1/2})$. Moreover, in \cite{HMSSY}, they propose the conjecture that if we assume that the digraph $G$ is connected as well, then $G$ contains a directed path of length $\Omega(d)$ starting from any vertex. The main result of our paper is the following theorem, which breaks the  $\Omega(d^{1/2})$ barrier for paths.

\begin{theorem} \label{thm:directedpath}
    Let $\eps=1/40$. Then there exists a constant $c>0$ such that the following holds. Let $G$ be an Eulerian, connected directed graph with average degree at least $d$ and let $v\in V(G)$. Then $G$ contains a directed path of length at least $cd^{1/2+\eps}$ starting at $v$.
\end{theorem}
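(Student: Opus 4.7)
The plan is to begin with a directed breadth-first search from $v$. Let $V_i$ denote the set of vertices at directed distance exactly $i$ from $v$, and let $L$ be the largest index with $V_L \neq \emptyset$. If $L \geq c d^{1/2+\eps}$ then we are immediately done, since any BFS witness gives a directed path of length $L$ starting at $v$, so assume $L < c d^{1/2+\eps}$. Since $G$ is weakly connected and Eulerian it is strongly connected, so every vertex of $G$ lies in some $V_i$. Every edge $uw$ with $u \in V_i$ satisfies $w \in V_{\leq i+1}$, so each edge is either ``forward'' from $V_i$ to $V_{i+1}$, or ``backward'' from $V_i$ to $V_{\leq i}$. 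Contracting layers, one obtains an Eulerian multi-digraph on $L+1$ nodes, and the Eulerian condition yields, for every prefix cut, that the forward flow $e(V_k, V_{k+1})$ equals the total backward flow across that cut; equivalently, the total number of forward edges equals the sum of the lengths of all backward edges, a quantitative constraint I plan to exploit.

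I would then combine this layered picture with the main tool of \cite{HMSSY}, extracting an Eulerian subgraph $H \subseteq G$ with minimum degree $\Omega(\sqrt{d})$. In $H$, the standard longest-path argument gives a directed path of length $\Omega(\sqrt{d})$ from any vertex, which by strong connectivity of $G$ can be prepended by a short segment from $v$ into $H$. To beat $\sqrt{d}$, the plan is to analyse how $H$ distributes itself across the layers $V_i$. In the ``spread'' case, where $H$ meets many layers non-trivially, one should be able to weave a directed path through $H$ whose length is forced to be large by the layer depth combined with the minimum degree inside each layer. In the ``concentrated'' case, where most of $H$ lives in only a few layers, the Eulerian flow-balance across the cuts between layers, together with near-regularity of $H$, should force a density-increment step: a sub-Eulerian-subgraph $H' \subseteq H$ with strictly larger minimum degree on a smaller vertex set, which can then be fed back into the argument.

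The specific exponent $\eps = 1/40$ is expected to emerge from iterating this dichotomy a bounded number of times and balancing several competing quantities: the upper bound $L$ on the layer count, the HMSSY base factor $\sqrt{d}$, the sizes of the selected layers, and the per-iteration density gain. A preliminary reduction to a near-regular Eulerian subgraph, again obtainable by HMSSY-style pruning, should be carried out first to keep the bookkeeping clean, and a weighted version of HMSSY may be needed so that density increments compose cleanly with the layer restriction.

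The main obstacle I expect is that the HMSSY bound of $\sqrt{d}$ for the minimum degree of an Eulerian subgraph is tight as a pure min-degree statement, so the extra $d^{1/40}$ factor must come from exploiting the interaction between $v$, the BFS layers, and the Eulerian flow conditions across cuts --- not merely degree information. Preserving the Eulerian property throughout the iterative boosting, while maintaining a short route from $v$ into the evolving subgraph and not losing too much degree at each step, is where the bulk of the technical effort will sit.
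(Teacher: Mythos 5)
Your proposal is a programme rather than a proof, and both of its decisive steps are missing. The setup is fine: the BFS layering, the observation that one may assume the depth $L<cd^{1/2+\eps}$, and the flow-balance identity across prefix cuts (total forward edges equal the total ``length'' of backward edges) are all correct, as is the extraction via \cite{HMSSY} of an Eulerian subgraph $H$ of minimum degree $\Omega(\sqrt{d})$. But everything beyond $\sqrt{d}$ is deferred to two claims that are not argued and that I do not see how to substantiate. In the ``spread'' case, $H$ meeting many layers does not force a long path: $H$ has minimum degree $\Omega(\sqrt{d})$ globally, not within each layer, and $H$ may well decompose into many strongly connected pieces of size $O(\sqrt{d})$ scattered across layers with no directed route between them inside $H$; routing between them through $G$ is exactly the kind of connectivity control that requires a separate argument (the paper devotes Lemmas \ref{lemma:oneconnected} and \ref{lemma:highlyconnected} to showing that no small set containing $v$ can disconnect the graph, using cycle decompositions and the induction hypothesis). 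In the ``concentrated'' case you assert a density increment --- an Eulerian subgraph of strictly larger minimum degree --- but you give no mechanism for it, and you yourself note that the HMSSY bound is tight as a min-degree statement; concentration of $H$ in few BFS layers is perfectly compatible with $H$ having no denser Eulerian subgraph (the BFS layering from $v$ imposes no constraint on the internal structure of $H$ beyond the cut identities, which are also satisfied by the tight examples). So the dichotomy does not close, and the exponent $1/40$ is not ``expected to emerge'' from anything concrete.

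You are also missing the obstruction that drives most of the actual proof: to extend a path starting at $v$, one must delete the vertices already used, and (as Lemma \ref{lemma:bigdeg} shows) those vertices are forced to have degree close to linear in $n$, so deleting them naively destroys far too many edges and ruins the Eulerian property. The paper's solution is an induction on $d$ combined with a surgery that replaces each two-edge path $u\to w\to w'$ through a used vertex $w$ by a ``fake'' edge $u\to w'$, keeping the residual graph Eulerian and dense, and then re-expands a bounded number of fake edges at the end. Your plan has no substitute for this step, and without it even a correct spread/concentrated dichotomy would not produce a path \emph{starting at $v$} of the claimed length. As it stands, the proposal establishes only the known $\Omega(\sqrt{d})$ bound.
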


We call a directed graph \emph{strongly connected} if for any two vertices $u,v$ there exists a path from $u$ to $v$. Note that if $G$ is Eulerian, then $G$ is strongly connected if and only if the underlying unordered graph is connected.   If $G$ is an Eulerian digraph, then every component of $G$ is Eulerian as well, and if $G$ has average degree $d$, then $G$ contains a component of average degree at least $d$. Therefore, Theorem \ref{thm:directedpath} immediately implies that any Eulerian digraph of average degree $d$ contains a directed path of length $\Omega(d^{1/2+1/40})$.


\section{Overview of the proof}

Let us give a brief outline of the proof of Theorem \ref{thm:directedpath}. We will proceed by induction on $\lfloor d\rfloor$, and let $\phi(d)=cd^{1/2+\eps}$. Let $G$ be a connected Eulerian digraph on $n$ vertices of average degree at least $d$, and let $v\in V(G)$. 

Suppose that $G$ contains no directed path of length at least $\phi(d)$ starting at $v$. Then it is easy to show that $G$ contains no directed cycle of length at least $\phi(d)+1$. As $G$ is Eulerian, we can partition the edge set of $G$ into cycles. We remove the edges of those cycles in this partition that contain $v$. The resulting graph $G'$ is still Eulerian, so if the density of $G'$ is at least $d'$ for some $d'\leq d-1$, then $G'$ contains a directed path of length at least $\phi(d')$. If $d'$ is not too small, more precisely $\phi(d')\geq \phi(d)-1$, then this path can be extended to a path starting from $v$ of length at least $\phi(d)$. However, if $d'$ is small, then the degree of $v$ must be very large, in particular of order $\Omega(nd^{-2\eps})$.


We shall prove that there cannot be a small set $S$ (of size significantly less than $d^{1/2}$) containing $v$ such that $G-S$ is not strongly connected. Indeed, if there is such a set, we can find a long path from $v$ as follows. Take a cycle decomposition of the edge set of $G$ and remove those cycles which go through $S$. Then we obtain an Eulerian subgraph $G'$ of $G$. It is disconnected, since $G-S$ is not strongly connected. Since $S$ is not too large, and each cycle that we removed is not too long, the average degree of $G'$ is close to $d$. Hence, there exists a connected component $T$ in $G'$ with average degree $d'$ close to $d$. By the induction hypothesis, from any vertex of $T$ there is a path of length at least $\phi(d')$ in $G'\lbrack T\rbrack$. Thus, it suffices to find a path in $G$ starting at $v$ and ending in $T$ which only meets $T$ in its endvertex and which has length at least $\phi(d)-\phi(d')$. We find such a path by going from $v$ to another dense connected component $R$ of $G'$, taking a fairly long path inside $R$, and going from $R$ to $T$. See Lemma \ref{lemma:highlyconnected} for the details.

Thus, there is no set of size roughly $d^{1/2}$ whose removal makes $G$ not strongly connected. Hence, we can easily find a path $P$ of length close to $d^{1/2}$ starting at $v$ such that $G-A$ is strongly connected, where $A$ is the set of vertices of $P$, except for the last vertex. The average degree of the vertices on $P$ must be very large, close to linear in $n$. Indeed, take a cycle decomposition of the edges of $G$ and remove all those cycles from $G$ which go through $V(P)$. Call the resulting subgraph $G'$. If the average degree of the vertices on $P$ is not large in $G$, then we removed only a few short cycles, so $G'$ has large average degree. Then by the induction hypothesis we can find a path in $G'$ with length close to $\phi(d)$. Using that $G-A$ is strongly connected, we can turn this into a path from $v$ with length at least $\phi(d)$ by using $P$ as an initial segment.

Therefore we have a long path $P$ with $G-A$ strongly connected such that the vertices on $P$ have large average degree. Similarly as before, we intend to use $P$ as the initial segment of the long path from $v$ that we seek for. To extend $P$ to a long path, we would like to find an Eulerian graph $G'$ with vertex set $V(G)\setminus V(P)$ which has average degree close to $d$. To find such a $G'$, we need to remove the edges of $G$ incident to $V(P)$ but keep the graph Eulerian. Since the vertices on $P$ have large average degree, we would lose too many edges if we removed all cycles through $V(P)$, so we shall do something slightly different. As long as there are vertices $u,v\in V(G)\setminus V(P)$ with $N^+(u)\cap N^-(v)\cap V(P)$ large, we choose some $w\in N^+(u)\cap N^-(v)\cap V(P)$, we add a new edge $uv$ and remove edges $uw$ and $wv$. This way we keep our graph Eulerian and get rid of the edges incident to $V(P)$. The remaining graph has average degree $d'$ close to $d$, and we can find in it a path of length at least $\phi(d')$. However, we may have some edges in this path which were not present in $G$. To get a valid path in $G$, we replace any such fake edge $uv$ with a path $uwv$ such that $w\in N_G^+(u)\cap N_G^-(v)\cap V(P)$. By the definition of the new (fake) edges, there are many possible choices for $w$, so we can make sure that the choices for different pairs $(u,v)$ are distinct. With some more delicate argument, we will make sure that we can modify the path $P$ very slightly such that it avoids these vertices $w$ (which we should not use more than once in our path). Combining this modified version of $P$ with the path of length at least $\phi(d')$ that we found (and using that $G-A$ is strongly connected to connect them), we get a path of length at least $\phi(d)$ starting at $v$.

\section{The key lemmas and some preliminaries}

Our notation is mostly standard. For a directed graph $G$ and vertex $v\in V(G)$, we write $N_G^+(v)$ for the set of outneighbours of $v$, and $N_G^-(v)$ for the set of inneighbours of $v$ in $G$. We omit floor and ceiling signs whenever they are not crucial.

In what follows, fix $\eps=1/40$ and suppose that $c>0$ is sufficiently small. Also, define the function $\phi(t)=ct^{1/2+\eps}$. 

If $0\leq d\leq 1$, then Theorem \ref{thm:directedpath} trivially holds. Fix $d>1$,  and assume that Theorem \ref{thm:directedpath} holds for every Eulerian digraph $G_0$ with average degree at least $d_{0}\leq d-1$. Then, we prove that Theorem \ref{thm:directedpath} holds for every Eulerian digraph $G$ with average degree at least $d$ as well.  This will follow easily from the next two lemmas.

Say that an Eulerian graph $G$ is \emph{$d$-full}, if $G$ has at least $(|V(G)|-1)d$ edges, but no proper Eulerian subgraph $H\subset G$ has at least 2 vertices and at least $(|V(H)|-1)d$ edges. Also, for simplicity, write $\gamma:=d^{\eps}$.

\begin{lemma} \label{lemma:findgoodpath}
   Let $G$ be an Eulerian, connected and $d$-full digraph on $n\geq 2$ vertices. Let $v\in V(G)$. Assume that $G$ has no path of length at least $\phi(d)$ starting at $v$. Then there exists a path $P$ in $G$ starting at $v$ with the following properties.
    
    \begin{enumerate}
        \item $P$ has length $p\geq d^{1/2}\gamma^{-5}$. \label{plong}
        
        \item \label{pconnected} Let $A$ be the set of vertices of $P$, except its endvertex. Then $G-A$ is strongly connected.
        
        \item Let $Y$ be the set of vertices of $P$ which have degree at least $\gamma^{-2}n$ in $G$. List them as $Y=\{y_1,\dots,y_{\ell}\}$ in their order of appearance on $P$. Then for all but at most $\gamma^{-2}|Y|+\gamma^{7}$ vertices $y_i\in Y$, $G$ has an edge of the form $y_jy_{j'}^+$ with $i-\gamma^{9}\leq j<i<j'\leq i+\gamma^{9}$. Here and below, for a vertex $x\in V(P)$ which is not the endvertex of $P$, $x^+$ denotes the vertex following $x$ on $P$. \label{prich}
        
    \end{enumerate}
\end{lemma}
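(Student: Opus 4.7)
The plan is to obtain properties~\ref{plong} and \ref{pconnected} from a greedy construction supported by Lemma~\ref{lemma:highlyconnected}, and then to secure~\ref{prich} by choosing $P$ extremally among all paths fulfilling the first two conditions.

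First, I would invoke Lemma~\ref{lemma:highlyconnected}, which (applied to our $d$-full, connected, Eulerian $G$ and the vertex $v$) I expect to guarantee that $G-S$ is strongly connected for every $S\ni v$ with $|S|\le d^{1/2}\gamma^{-5}$. This matches the overview, where such a highly-connected property is deduced from the assumption that no path of length $\phi(d)$ starts at $v$; the hypothetical small separator $S$ would otherwise produce a long path from $v$ using a component-swap argument.

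Second, I would build $P$ greedily. Initialize $P=(v)$ and, while the internal-vertex set $A$ of the current $P$ has $|A|<d^{1/2}\gamma^{-5}$, extend $P$ by an outneighbour of its endvertex that is not already on $P$. Once $|V(P)|\ge2$ one has $v\in A$, so the previous step yields that $G-A$ is strongly connected; therefore the endvertex has an outneighbour in $G-A$, and this outneighbour lies outside $V(P)$ unless $V(P)=V(G)$. But $d$-fullness forces $d\le n$, so $|V(P)|\le d^{1/2}\gamma^{-5}+1=d^{3/8}+1<n$ throughout. The process therefore continues until $|A|=d^{1/2}\gamma^{-5}$, which furnishes \ref{plong} and \ref{pconnected}.

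Third, for~\ref{prich} we must not pick the greedy $P$ arbitrarily. Among all paths of length exactly $d^{1/2}\gamma^{-5}$ starting at $v$ and satisfying~\ref{pconnected}, I would choose $P$ so as to \emph{maximise} the number of \emph{good} vertices $y_i\in Y$, where $y_i$ is good if some edge $y_jy_{j'}^+$ with $i-\gamma^{9}\le j<i<j'\le i+\gamma^{9}$ belongs to $G$. Assuming for contradiction that more than $\gamma^{-2}|Y|+\gamma^{7}$ vertices of $Y$ are bad, the idea is to find a local modification of $P$ that strictly increases the number of good vertices. For any bad $y_i$, the predecessors $y_{i-\gamma^{9}},\dots,y_{i-1}\in Y$ all have degree $\ge\gamma^{-2}n$ and so have many outneighbours; since strong connectivity of $G-A$ also lets us patch short detours, one should be able to reroute $P$ near $y_i$ so that some shortcut edge in the window of $y_i$ appears, contradicting the extremal choice of $P$.

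The main obstacle is the third step: the tolerance $\gamma^{-2}|Y|+\gamma^{7}$ is tight, and local rerouting near one bad $y_i$ could in principle spoil windows of several nearby $y_{i'}$'s by shifting their $Y$-indices or by relocating a handful of high-degree vertices in or out of $Y$. A double-counting argument is natural here: charge every shortcut edge of the form $y_jy_{j'}^+$ in $G$ to the at most $2\gamma^{9}$ indices $i$ whose window it can witness, deducing that the $\gamma^{-2}|Y|$ tolerance absorbs the typical density of shortcut edges one is guaranteed to find, while the additive $\gamma^{7}$ slack absorbs boundary effects near the two ends of $Y$ together with the indices disturbed by a single local modification. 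Executing this cleanly, so that one reroute reliably improves the global count of good vertices without breaking~\ref{pconnected}, is what I expect to require the most delicate calculation.
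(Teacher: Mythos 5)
Your first two steps are sound and match the paper: Lemma~\ref{lemma:highlyconnected} guarantees that $G-A$ stays strongly connected for every initial segment of length at most $d^{1/2}\gamma^{-5}$, so the greedy extension never gets stuck, and properties~\ref{plong} and~\ref{pconnected} follow. The gap is in your treatment of property~\ref{prich}, which you leave as a sketch. The extremal choice plus local-rerouting scheme does not supply the key mechanism: an edge $y_jy_{j'}^+$ exists only if the \emph{successor} of $y_{j'}$ on $P$ happens to lie in $N^+(y_j)$, and merely knowing that $y_j$ has degree at least $\gamma^{-2}n$ gives no reason why any admissible reroute should place some $y_{j'}^+$ into $N^+(y_j)$. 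Moreover, any local modification changes the vertex set of the path, hence the set $Y$ and the indexing of every later $y_{i'}$, so a single reroute can destroy the ``good'' status of unboundedly many other vertices; your charging argument addresses only how many windows one shortcut edge can serve, not this instability, and it gives no lower bound on the supply of shortcut edges in the first place.

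The paper resolves this by making the greedy choices \emph{random}: each $v_i$ is a uniformly random out-neighbour of $v_{i-1}$ avoiding earlier vertices. Two separate ingredients then combine. First, a deterministic Tur\'an-type count: since each $y\in Y$ has $|N^+_G(y)|\geq\gamma^{-2}n$, among any $10\gamma^2$ vertices of $Y$ two must have common out-neighbourhood of size at least $\frac{1}{100}\gamma^{-4}n$, and iterating this shows that in every window of $\gamma^9$ consecutive indices all but at most $\gamma^7$ vertices $y_i$ admit some $a<i$ with many $b>i$ in the window satisfying $|N^+(y_a)\cap N^+(y_b)|\geq\frac{1}{100}\gamma^{-4}n$ (these are the non-``dangerous'' vertices). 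Second, a probabilistic step: for such a pair, the random successor $v_b^+$ lands in $N^+(v_a)$ with probability at least $\frac{1}{200}\gamma^{-4}$ independently over the candidates $b$, so the probability that all $\gamma^{4.5}$ candidates fail is exponentially small in $\gamma^{0.5}$, and a union bound over all pairs shows that with positive probability every non-dangerous $y_i$ gets its shortcut edge. Without the randomness of the successors (or some substitute for it), I do not see how to complete your step three.
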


\begin{lemma} \label{lemma:finishproof}
   Let $G$ be an Eulerian, connected and $d$-full digraph on $n\geq 2$ vertices. Let $v\in V(G)$. Let $P$ be a path in $G$ starting at $v$ and satisfying properties \ref{plong}, \ref{pconnected}, \ref{prich} in the previous lemma. Then $G$ has a path of length at least $\phi(d)$ which starts at $v$.
\end{lemma}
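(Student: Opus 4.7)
My plan is to use $P$ (after a small modification) as the initial segment of the desired $v$-path and to find a long continuation in an auxiliary Eulerian digraph $G'$ on $U:=V(G)\setminus V(P)$. Write $W:=V(P)$ and fix a threshold $\tau$ of the form $\gamma^C$ with $C$ to be optimized. Starting from $G$, I iteratively apply the following reduction: while there exist $u,v\in U$ with $|N^+(u)\cap N^-(v)\cap W|\geq\tau$, pick some $w$ in the intersection, delete the edges $uw$ and $wv$, and insert a \emph{fake} edge $uv$. Each move preserves Eulerian-ness, and at termination every fake edge $uv$ is labelled with its pool of $\geq\tau$ length-two detours $u\to w\to v$ in the original $G$. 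Then I take an Eulerian cycle decomposition of the remaining multigraph $H$, discard the cycles meeting $W$, and restrict to $U$ to obtain the Eulerian digraph $G'$.

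\textbf{Density and induction.} Applying Cauchy-Schwarz to the termination condition $\sum_{w\in W} d^-_{H,U}(w)\,d^+_{H,U}(w)\leq |U|^2\tau$, and using Eulerian-ness to match in- and out-degrees, yields an $O(|U|\sqrt{|W|\tau})$ bound on the $H$-edges between $W$ and $U$. The residual contribution $|W|\gamma^{-2}n$ from vertices in $W\setminus Y$ (of degree $<\gamma^{-2}n$) is handled by the trivial degree bound. Combining with $|E(G)|\geq(n-1)d$ from $d$-fullness and the fact that every directed cycle of $G$ has length $\leq\phi(d)$, an appropriate choice of $\tau$ makes the average degree $d'$ of $G'$ drop by so little that $\phi(d)-\phi(d')\ll p=d^{1/2}\gamma^{-5}$, using $\phi'(d)=O(\gamma/d^{1/2})$. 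Since $G'$ is Eulerian with $d'\leq d-1$, some connected component $T$ has average degree $\geq d'$, so the induction hypothesis applied to $G'[T]$ gives, from any chosen $q_0\in T$, a directed path $Q$ of length $\geq\phi(d')$. I then expand $Q$ into a walk $Q^\ast$ in $G$ by replacing each fake edge $uv$ with a two-step detour $u\to w_{uv}\to v$; a greedy choice produces pairwise distinct $w_{uv}\in Y$ (a substantial majority of the $\tau$ candidates lie in $Y$, because vertices of $W\setminus Y$ have low degree and so participate in few length-two paths between pairs of $U$-vertices). Let $Z\subseteq Y$ be the substitution set, so $|Z|\leq \phi(d')$.

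\textbf{Modification of $P$, stitching, and main obstacle.} Hypothesis (3) now lets me modify $P$ to avoid $Z$: for each non-exceptional $y_i\in Z$, I use the guaranteed shortcut edge $y_jy_{j'}^+$ with $j,j'\in[i-\gamma^9,i+\gamma^9]$ to excise the $P$-segment from $y_j$ to $y_{j'}^+$. The at most $\gamma^{-2}|Y|+\gamma^7$ exceptional $Y$-vertices are dealt with by re-picking the substitution from the $\tau$-slack, provided $\tau$ comfortably exceeds $\gamma^{-2}|Y|+\gamma^7+|Z|$. Call the modified path $P'$ and its endpoint $v'$; the strong connectivity of $G-A\supseteq G-(V(P')\setminus\{v'\})$ supplies a directed path $R$ from $v'$ to $q_0$, which can be trimmed to be internally vertex-disjoint from $V(P')\cup V(Q^\ast)$. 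The concatenation $P'\cdot R\cdot Q^\ast$ is then a directed path in $G$ starting at $v$ of length at least $|P'|+1+\phi(d')$, which will exceed $\phi(d)$ because $\phi(d)-\phi(d')$ is negligible relative to $p$. The principal difficulty is the quantitative length accounting: I must simultaneously (i) keep the edge loss in $G'$ small, which bounds $\tau$ from above in terms of $|W|\leq\phi(d)$ and the max cycle length $\phi(d)$; (ii) maintain enough $\tau$-slack for distinct substitutions and for avoiding exceptional $Y$-vertices, bounding $\tau$ from below in terms of $|Z|$ and $\gamma^{-2}|Y|+\gamma^7$; and (iii) bound the total excision loss on $P$ using the $\gamma^9$-locality of shortcuts together with the density of $Y$ in $P$ (which is substantial because the vertices on $P$ have very large average degree in $G$, otherwise one could already win by deleting cycles through $V(P)$ directly). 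The three-way balance of these constraints is what ultimately pins down the exponent $\eps=1/40$.
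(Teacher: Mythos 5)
Your overall architecture matches the paper's: replace length-two detours $u\to w\to v$ through $P$ by fake edges to build an Eulerian graph avoiding $V(P)$, control the remaining edges at $V(P)$ by Cauchy--Schwarz against the termination condition, apply induction to a dense component, re-expand the fake edges using distinct substitution vertices, and stitch via property \ref{pconnected} after modifying $P$ with the shortcut edges from property \ref{prich}. However, there is a genuine gap at the substitution step, and it is exactly where the paper needs an extra idea that your proposal omits. You concede that the set $Z$ of substitution vertices may have size up to $\phi(d')\approx cd^{21/40}$. This is fatal for three reasons. First, $Z\subseteq Y\subseteq V(P)$ and $|V(P)|=p+1\approx d^{3/8}=d^{15/40}$, so for large $d$ there are simply not enough vertices of $Y$ to serve as pairwise distinct substitutions, no matter how large the pool $\tau$ is. Second, property \ref{prich} (equivalently, part (a) of the paper's Lemma \ref{lemma:y'}) only lets you excise a set $S$ of \emph{fewer than $\gamma^2$} well-separated vertices from $P$ while retaining length $p/2$; each excision can cost up to $p/(2\gamma^2)$ of the path, so avoiding a set of size anywhere near $\phi(d')\gg\gamma^2$ destroys $P$ entirely. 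Third, your own three-way balance is infeasible: the density requirement forces the number of $H$-edges incident to $V(P)$ to be $O(pn\gamma^{-2})$, which via your Cauchy--Schwarz bound $O\bigl(n\sqrt{p\tau}\bigr)$ caps $\tau$ at roughly $p\gamma^{-4}=d^{11/40}$, while your slack requirement $\tau>|Z|$ would need $\tau\gtrsim d^{21/40}$.

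The missing ingredient is a preprocessing step that \emph{a priori} bounds the number of fake edges on any continuation path by $\gamma^2$. The paper achieves this by first deleting, before creating any fake edges, a maximal collection of ``problematic'' paths (paths of length at most $d^{1/2}\gamma$ meeting $V(P)$ in more than $\gamma^2$ vertices), at an affordable cost of $O(pnd^{1/2}\gamma^{-1})$ edges. Then, if the induced path $R$ contained more than $\gamma^2$ fake edges, expanding a short initial batch of them would exhibit a problematic path in the pruned graph $H_0$, a contradiction. With $s\leq\gamma^2$ fake edges, a pool of size $7\gamma^{11}$ suffices for a greedy choice of distinct, well-separated substitution vertices, and Lemma \ref{lemma:y'} applies to excise them from $P$. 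You would need to add this step (or an equivalent device bounding $s$) for your argument to close; as written, the quantitative accounting you defer to the end cannot be made to work.
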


\begin{proof}[Proof of Theorem \ref{thm:directedpath}]
 Let $G$ be an Eulerian, connected directed graph with average degree at least $d$ and let $v\in V(G)$. Choose a minimal Eulerian subgraph $H$ with at least two vertices and at least $(|V(H)|-1)d$ edges. It is easy to see that $H$ must be connected. Since $G$ is connected, there is a path from $v$ to $V(H)$ in $G$. We may choose such a path which only meets $V(H)$ in its last vertex $w$. By Lemma \ref{lemma:findgoodpath} and Lemma \ref{lemma:finishproof}, $H$ has a path of length at least $\phi(d)$ starting at $w$. Together with the path from $v$ to $w$ we found earlier, we obtain a path of length at least $\phi(d)$ in $G$ starting at $v$.
\end{proof}

We shall give the proofs of Lemma \ref{lemma:findgoodpath} and Lemma \ref{lemma:finishproof} in the next section. We now state and prove a few more results which will be used in the proofs. Let us start with the following simple lemma.

\begin{lemma}\label{lemma:shortcycle}
Let $G$ be a strongly connected digraph, and let $v\in V(G)$. Suppose that $G$ contains no path of length $\phi(d)$ which starts at $v$. Then $G$ contains no directed cycle of length at least $\phi(d)+1$. 
\end{lemma}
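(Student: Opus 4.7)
The plan is to argue by contradiction. Suppose that $G$ does contain a directed cycle $C$ of length at least $\phi(d)+1$. I will build from this a directed path of length at least $\phi(d)$ starting at $v$, which will contradict the hypothesis.

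I split into two cases depending on whether $v$ lies on $C$. If $v\in V(C)$, then simply traverse $C$ starting at $v$ and stop one step before returning to $v$. This is a directed path starting at $v$ whose length equals $|C|-1\geq \phi(d)$, contradicting the assumption.

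If $v\notin V(C)$, I use strong connectivity of $G$ to fix any directed path from $v$ to a vertex of $C$, and among all such paths choose one $Q$ of minimum length. By minimality, $Q$ meets $V(C)$ only at its terminal vertex $w$; otherwise we could truncate $Q$ at the first vertex in $V(C)$ and obtain a shorter such path. Now I concatenate $Q$ with the traversal of $C$ starting at $w$ and terminating at the in-neighbour of $w$ on $C$. Since $V(Q)\cap V(C)=\{w\}$, no vertex is repeated, so this concatenation is a genuine directed path starting at $v$, of length $|Q|+|C|-1\geq 1+\phi(d)$.

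The only subtlety is guaranteeing that the walk produced in the second case is actually a path, and this is handled by the minimality of $Q$. No further estimates are required, so there is no real obstacle.
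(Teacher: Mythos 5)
Your proof is correct and follows exactly the paper's argument: traverse the cycle directly if it contains $v$, and otherwise use strong connectivity to route a path from $v$ to the cycle and then around it. The paper leaves the disjointness of the connecting path and the cycle implicit, whereas you justify it via minimality of $Q$; this is a sound way to fill in that detail.
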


\begin{proof}
Assume that $G$ does contain a cycle of length at least $\phi(d)+1$. If the cycle contains $v$, then we trivially have a path of length at least $\phi(d)$ starting at $v$. Otherwise, as $G$ is strongly connected, there exists a path from $v$ to the vertex set of our cycle. Using this path and the cycle, we get a path of length at least $\phi(d)$ starting at $v$.
\end{proof}

We note that the function $\phi$ satisfies the following inequality.

\begin{lemma} \label{lemma:simpleineq}
    Let $0\leq \lambda\leq \frac{1}{4}d^{1/2}\gamma$, then 
    $$\phi(d-\lambda d^{1/2}\gamma^{-1})\geq \phi(d)-2c\lambda.$$
\end{lemma}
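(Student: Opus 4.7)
The plan is to view this as a standard concavity/mean-value-theorem estimate for the function $\phi(t)=ct^{1/2+\eps}$, using the hypothesis on $\lambda$ only to guarantee that $d-\lambda d^{1/2}\gamma^{-1}$ stays bounded away from zero (indeed at least $\tfrac{3}{4}d$).

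First, set $\alpha=1/2+\eps$ and $x=\lambda d^{1/2}\gamma^{-1}$, so that we want to show $d^{\alpha}-(d-x)^{\alpha}\leq 2\lambda$. The assumption $\lambda\leq \tfrac{1}{4}d^{1/2}\gamma$ rewrites as $x\leq \tfrac{1}{4}d$, hence $d-x\geq \tfrac{3}{4}d>0$. By the mean value theorem applied to $t\mapsto t^{\alpha}$ on $[d-x,d]$, there is some $\xi\in(d-x,d)$ with
\[
d^{\alpha}-(d-x)^{\alpha}=\alpha\,\xi^{\alpha-1}\,x.
\]
Since $\alpha-1=\eps-1/2<0$, the function $t\mapsto t^{\alpha-1}$ is decreasing, so $\xi^{\alpha-1}\leq (d-x)^{\alpha-1}\leq (3d/4)^{\alpha-1}$.

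Next, I would just substitute back. Using $x=\lambda d^{1/2}\gamma^{-1}$ and $\gamma=d^{\eps}$,
\[
d^{\alpha}-(d-x)^{\alpha}\leq \alpha\,(3/4)^{\alpha-1}\,d^{\alpha-1}\cdot \lambda d^{1/2}d^{-\eps}=\alpha\,(3/4)^{\alpha-1}\,\lambda,
\]
because the exponent of $d$ is $(\alpha-1)+1/2-\eps=0$. It remains to check that the numerical constant $\alpha(3/4)^{\alpha-1}=(21/40)(4/3)^{19/40}$ is at most $2$, which is immediate (it is less than $0.7$). Multiplying the resulting inequality $d^{\alpha}-(d-x)^{\alpha}\leq 2\lambda$ by $c$ gives exactly $\phi(d-\lambda d^{1/2}\gamma^{-1})\geq \phi(d)-2c\lambda$.

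There is no real obstacle here: the only mildly delicate point is ensuring the exponents of $d$ cancel after substituting $\gamma=d^{\eps}$, which is why the statement is phrased with the precise factor $d^{1/2}\gamma^{-1}$ inside $\phi$. The bound $\lambda\leq \tfrac{1}{4}d^{1/2}\gamma$ is used only to keep $d-x$ comparable to $d$ so that the derivative of $t^{\alpha}$ on the relevant interval is uniformly bounded by an absolute constant.
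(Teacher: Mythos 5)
Your proof is correct and follows essentially the same route as the paper: both arguments reduce to a first-order (concavity) estimate for $t\mapsto t^{1/2+\eps}$, with the hypothesis $\lambda\le\frac14 d^{1/2}\gamma$ keeping the argument of $\phi$ within a constant factor of $d$, after which the exponents of $d$ cancel exactly and the remaining numerical constant is comfortably below $2$. The paper linearizes via the elementary inequality $(1-x)^r\ge e^{-2rx}\ge 1-2rx$ for $0\le x\le 1/4$ instead of the mean value theorem, but this is only a cosmetic difference.
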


\begin{proof}
    Note that for $0\leq x\leq 1/4$, we have $1-x\geq e^{-2x}$. Hence $(1-x)^r\geq e^{-2rx}\geq 1-2rx$ for any $r>0$. Therefore, taking $x=\lambda d^{-1/2-\eps}$ and $r=1/2+\eps$,
    \begin{align*}
        (d-\lambda d^{1/2-\eps})^{1/2+\eps}&=d^{1/2+\eps}(1-\lambda d^{-1/2-\eps})^{1/2+\eps}\\
        &\geq d^{1/2+\eps}(1-2(1/2+\eps)\lambda d^{-1/2-\eps})\\
        &\geq d^{1/2+\eps}-2\lambda.
    \end{align*}

\end{proof}
The following lemma shows that if there exists no path of length $\phi(d)$ from $v$, then the degree of $v$ must be very large.

\begin{lemma}\label{lemma:bigdeg}
    Let $G$ be a connected, $d$-full, Eulerian digraph on $n\geq 2$ vertices. Let $v\in V(G)$. Suppose that $G$ contains no path of length at least $\phi(d)$ starting at $v$. Then the degree of $v$ is at least $n\gamma^{-2}$.
\end{lemma}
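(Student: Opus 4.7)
The plan is to argue by contradiction: assume $d(v)<n\gamma^{-2}=nd^{-2\eps}$, and use the hypothesis (no path of length $\geq \phi(d)$ from $v$) to construct one anyway. Following the overview in Section~2, the strategy is to delete the few (short) cycles through $v$ from a cycle decomposition of $E(G)$, locate a dense Eulerian component of what remains, invoke induction there to obtain a long path, and reconnect it to $v$ using strong connectivity.

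Since $G$ is Eulerian and connected it is strongly connected, so Lemma~\ref{lemma:shortcycle} ensures every directed cycle of $G$ has length at most $\phi(d)$. Decompose $E(G)$ into edge-disjoint directed cycles and let $G'$ be obtained by deleting every cycle meeting $v$. At most $d(v)$ such cycles occur (one per out-edge of $v$), and each has length at most $\phi(d)$, so
\[
e(G')\;\geq\;(n-1)d-d(v)\phi(d)\;\geq\;(n-1)d-cnd^{1/2-\eps},
\]
and $G'$ is Eulerian with $v$ now isolated.

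Consider the weakly connected components of $G'-v$; they partition $V(G)\setminus\{v\}$, so by averaging some component $C^*$ has density
\[
d_{C^*}\;:=\;\frac{e(C^*)}{|V(C^*)|}\;\geq\;\frac{e(G'-v)}{n-1}\;\geq\;d-2cd^{1/2-\eps}\;=\;d-2c\cdot d^{1/2}\gamma^{-1}.
\]
For $d$ above a small absolute threshold this forces $d_{C^*}>0$, hence $|V(C^*)|\geq 2$; then $C^*$ is a proper Eulerian subgraph of $G$ (strongly connected, as a component of an Eulerian digraph), and $d$-fullness yields $d_{C^*}<d$. Set $d_0:=\min(d_{C^*},d-1)\leq d-1$. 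A short case analysis via Lemma~\ref{lemma:simpleineq} gives $\phi(d_0)\geq \phi(d)-1$ once $c$ is small: if $d_{C^*}\leq d-1$, apply the lemma with $\lambda=2c$ to deduce $\phi(d_0)\geq \phi(d)-4c^2\geq \phi(d)-1$; otherwise $d_0=d-1$, and the lemma with $\lambda=\gamma d^{-1/2}$ yields $\phi(d-1)\geq \phi(d)-2cd^{\eps-1/2}\geq \phi(d)-1$ for $d$ above an absolute constant.

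Applying the inductive hypothesis to $C^*$ at any chosen vertex $w^*\in V(C^*)$ produces a directed path $Q\subseteq C^*$ starting at $w^*$ of length at least $\phi(d_0)\geq \phi(d)-1$. By strong connectivity of $G$, pick $w^*$ as the terminal vertex of a shortest directed path $P_0$ in $G$ from $v$ to $V(C^*)$; this $P_0$ has length at least $1$ and its interior avoids $V(C^*)$, so $P_0$ and $Q$ meet only at $w^*$ and their concatenation is a genuine directed path in $G$ from $v$ of length at least $1+(\phi(d)-1)=\phi(d)$, contradicting the hypothesis. The small values of $d$ excluded above are handled trivially: then $\phi(d)<1$, and the hypothesis forces $v$ to have no out-edge, contradicting the connectedness of $G$ with $n\geq 2$. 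The main obstacle is the simultaneous requirement that the chosen $d_0$ be at most $d-1$ (to access induction) and satisfy $\phi(d_0)\geq \phi(d)-1$ (so that prepending one edge from $v$ suffices); this is exactly where the bound $n\gamma^{-2}$ in the statement is calibrated, through Lemma~\ref{lemma:simpleineq}.
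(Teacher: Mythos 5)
Your proof is correct and follows essentially the same route as the paper's: partition $E(G)$ into cycles (all short by Lemma \ref{lemma:shortcycle}), delete the at most $d(v)$ cycles through $v$, extract a dense connected Eulerian component, apply the induction hypothesis there, and prepend a path from $v$ via strong connectivity. The only nitpick is that Lemma \ref{lemma:shortcycle} bounds cycle lengths by $\phi(d)+1\leq 2\phi(d)$ rather than $\phi(d)$, which changes your constants by a factor of $2$ but affects nothing else.
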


\begin{proof}
By Lemma \ref{lemma:shortcycle}, every cycle in $G$ has length at most $\phi(d)+1\leq 2\phi(d)$ (if $\phi(d)<1$, then it is trivial to find a path of length at least $\phi(d)$). Suppose that the degree of $v$ is less than $\gamma^{-2}n$. Partition the edge set of $G$ into cycles and remove those which go through $v$, then we get a graph with at least $(n-1)d-\gamma^{-2}n\cdot 2\phi(d)\geq (n-1)(d-4cd^{1/2}\gamma^{-1})$ edges. In this graph $\tilde{G}$ there exists a connected component of average degree at least $d-4cd^{1/2}\gamma^{-1}$, in which from every vertex there is a path of length at least $\phi(d-4cd^{1/2}\gamma^{-1})\geq \phi(d)-1$ in $\tilde{G}$, where the inequality holds by Lemma \ref{lemma:simpleineq}, since $8c^2 \leq 1$ for small enough $c$. (If $d-4cd^{1/2}\gamma^{-1}>d-1$, then we cannot assume the existence of a path of length at least $\phi(d-4cd^{1/2}\gamma^{-1})$ in the dense connected component, but then we have a path of length at least $\phi(d-1)\geq \phi(d)-1$. In the rest of the paper we will often have a similar situation where for small values of $d$ a certain formula for the average degree of our subgraph may be greater than $d-1$ and hence the induction hypothesis for Theorem \ref{thm:directedpath} would not apply, but in this case, like above, we can just use the theorem for $d-1$.)

Since $G$ is strongly connected, we get a path of length at least $\phi(d)$ in $G$ starting at $v$.
\end{proof}

Next, we find a long cycle assuming that the host graph has not too large maximum degree compared to $d$. The following lemma is a weaker version of the result of Knierim et al. \cite{KLMN19} mentioned in the introduction. Nevertheless, we give a short proof to make our paper self-contained and to illustrate that bounding the maximum degree of a digraph indeed simplifies the problem.

\begin{lemma} \label{lemma:fewvertices}
    Let $d$ be sufficiently large and let $G$ be an Eulerian directed graph with average degree at least $d$ and maximum degree $\Delta(G)\leq d^{20}$. Then $G$ has a cycle of length at least $\frac{d^{2/3}}{400\log d}$.
\end{lemma}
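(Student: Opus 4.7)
The plan is to argue by contradiction: assume $G$ contains no cycle of length at least $r:=d^{2/3}/(400\log d)$, and derive a contradiction using the maximum-degree bound $\Delta(G)\le d^{20}$. Note that under this assumption, for every directed edge $(u,w)$ the cycle through $(u,w)$ has length at most $r$, so there is a return path from $w$ to $u$ of length at most $r-1$. My strategy has two stages: (i) extract an Eulerian subgraph $H'\subseteq G$ of large minimum degree via iterative short-cycle removal; (ii) find a long cycle in $H'$ by combining its minimum degree with a BFS argument that exploits the bound on $\Delta$.

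For stage (i), I initialize $H:=G$ and fix a threshold $D$ to be tuned. While $H$ contains a vertex $v$ with $0<d_H(v)<D$, I locate any directed cycle $C$ through $v$ in $H$ — which exists because $H$ is Eulerian and $v$ has positive in- and out-degree — and delete $E(C)$. The resulting $H$ stays Eulerian throughout, and by the contrary assumption each removed cycle has length at most $r$. Each vertex can be targeted at most $D$ times (each targeting decrements its degree by at least one), so in total the procedure deletes at most $nD$ cycles, hence at most $nDr$ edges. Choosing $D$ such that $Dr<d$ leaves a nonempty Eulerian subgraph $H'$ with $\delta(H')\ge D$.

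For stage (ii), I perform BFS from any vertex of $H'$. Since $\Delta(H')\le d^{20}$, the BFS has depth at least $\log |V(H')|/(20\log d)$, yielding a simple directed path $P$ of this length. Each edge of $P$ admits a return path of length at most $r-1$ by the contrary assumption; concatenating such returns with segments of $P$ and shortcutting repeats produces a simple cycle whose length is of the same order as the BFS depth. In parallel, the classical greedy-walk argument in $H'$ gives a cycle of length at least $D+1$. Combining these two lower bounds with an appropriate tuning of $D$ produces a cycle of length at least $r$, contradicting the hypothesis.

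The main obstacle is the parameter balancing needed to reach the exponent $2/3$. If one uses only the minimum-degree output of stage (i), the cycle length is at most $D\approx d/r$, and balancing $D\ge r$ with $Dr<d$ forces $r\le \sqrt d$, short of the target. The improvement to $d^{2/3}/\log d$ comes only when the BFS-depth bound in stage (ii) dominates, which requires $|V(H')|$ to be much larger than $D+1$ — indeed, the number of surviving vertices must be a nontrivial power of $d$ for $\log|V(H')|/\log\Delta$ to be of order $d^{2/3}/\log d$. Ensuring this demands a more refined edge count in the removal step than the crude bound $nDr$, together with a careful choice of $D$ that simultaneously keeps the bulk of the edges intact, keeps $H'$ large in vertex count, and makes the BFS-based cycle length and the greedy-walk cycle length comparable to $r$. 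This interplay of the three parameters $D$, $|V(H')|$, and $\Delta$ is the crux of the calculation.
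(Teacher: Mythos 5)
Your proposal has a genuine gap: neither of the two cycle-length lower bounds you intend to combine can reach $d^{2/3}/\log d$, and no tuning of $D$ repairs this. The greedy-walk bound yields a cycle of length about $\delta(H')+1\geq D+1$, but your own constraint $Dr<d$ (needed so that removing at most $nDr$ edges leaves the subgraph nonempty) forces $D\lesssim d/r\approx d^{1/3}\log d$, well short of $r$; as you note yourself, balancing $D$ against $r$ with this crude edge count caps you at $\sqrt{d}$. The BFS bound is much weaker than you estimate: in a digraph of maximum out-degree $\Delta$, at most $\Delta^{k+1}$ vertices are reachable in $k$ steps, so the BFS depth is only $\log|V(H')|/\log\Delta$, which is \emph{logarithmic} in the number of vertices. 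Even if $|V(H')|$ were an enormous power of $d$, this depth would be $O(1)$ relative to any fixed power of $d$; to make it comparable to $d^{2/3}/\log d$ one would need $n\geq \exp(\Omega(d^{2/3}))$, which is not available (the graph may have only about $d$ vertices). So the parameter balancing you defer to "the crux of the calculation" cannot be carried out with these ingredients.

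The missing idea, and the way the paper proceeds, is a dichotomy based on girth. Take a maximal collection of pairwise edge-disjoint cycles of length at most $d^{1/3}$; let $G_1$ be their union and $G_2$ the remaining edges, both Eulerian, with $G_2$ containing no cycle of length at most $d^{1/3}$. If $e(G_1)\geq nd/2$, then your iterative removal works \emph{because the removed cycles have length at most $d^{1/3}$ rather than $r$}: deleting a vertex of degree at most $d^{2/3}/4$ costs at most $(d^{2/3}/4)\cdot d^{1/3}=d/4$ edges, so a nonempty Eulerian subgraph of minimum degree $d^{2/3}/4$ survives, and the greedy walk alone gives a cycle of that length. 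If instead $e(G_2)\geq nd/2$, one extracts an Eulerian subgraph of $G_2$ with minimum degree $d^{1/3}/4$ and girth greater than $d^{1/3}$, randomly partitions its vertex set into $t=d^{1/3}/(200\log d)$ classes, and applies the Lov\'asz Local Lemma (this is precisely where $\Delta(G)\leq d^{20}$ is used, to bound the dependencies) to guarantee that every vertex has an out-neighbour in every class. A walk that spends $d^{1/3}$ steps in each class before advancing to the next cannot revisit a vertex until it has taken roughly $(t-1)d^{1/3}\approx d^{2/3}/(200\log d)$ steps, because the large girth forbids short returns. Your proposal contains a weakened form of the first half of this dichotomy but has no substitute for the second half.
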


\begin{proof}
Let $\mathcal{C}$ be a maximal collection of pairwise edge-disjoint cycles of length at most $d^{1/3}$ in $G$. Let $G_1$ be the subgraph of $G$ whose edge set is $\bigcup_{C\in \mathcal{C}} C$. Let $G_2$ be the subgraph of $G$ whose edge set is $E(G)\setminus E(G_1)$. Clearly, both $G_1$ and $G_2$ are Eulerian. Moreover, $G_2$ contains no cycle of length at most $d^{1/3}$. Since $G_1$ and $G_2$ partition the edge set of $G$, we have $e(G_1)\geq nd/2$ or $e(G_2)\geq nd/2$.
\begin{description}
\item[Case 1.] $e(G_1)\geq nd/2$.

Delete vertices and edges of $G_1$ as follows. As long as $G_1$ has a vertex $v$ of degree at most $d^{2/3}/4$, delete $v$ and the edges of all $C\in \mathcal{C}$ which contain $v$. In each step we delete a vertex, we delete at  most $\frac{d^{2/3}}{4}\cdot d^{1/3}=d/4$ edges, so in total we deleted at most $nd/4$ edges. Moreover, the resulting subgraph of $G_1$ is non-empty and is a disjoint union of cycles, so it is Eulerian. Hence, $G_1$ has an Eulerian subgraph $H$ with minimum degree at least $d^{2/3}/4$. It is easy to see that $H$ contains a cycle of length at least $d^{2/3}/4$.

\item[Case 2.] $e(G_2)\geq nd/2$.

Assume that every cycle in $G_2$ has length at most $d^{2/3}$, otherwise we are done. By a similar procedure as in Case 1, we can find an Eulerian subgraph $H$ in $G_2$ which has minimum degree at least $d^{1/3}/4$. Let $t=\frac{d^{1/3}}{200\log d}$ and partition $V(H)$ randomly as $S_1\cup S_2\cup \dots \cup S_t$ by taking $v\in S_i$ with probability $1/t$ for every $v\in V(H)$, $1\leq i\leq t$, independently for all $v$. For any $v\in V(H)$ and any $1\leq i\leq t$, the probability that $N^+_H(v)\cap S_i=\emptyset$ is at most $(1-1/t)^{d^+_H(v)}\leq \exp(-\frac{d^+_H(v)}{t})\leq \exp(-\frac{d^{1/3}}{4t})=d^{-50}$. Note that for any fixed $v\in V(H)$ and $1\leq i\leq t$, the event $N_H^+(v)\cap S_i=\emptyset$ is independent of the events $N_H^+(w)\cap S_j=\emptyset$ for all $w\in V(H)$ with $N_H^+(w)\cap N_H^+(v)=\emptyset$ and all $j$. This means that $N_H^+(v)\cap S_i=\emptyset$ is independent of all but at most $\Delta(H)^2\cdot t$ events of the form $N_H^+(w)\cap S_j=\emptyset$. But $\Delta(H)^2\cdot t< d^{50}/e$, so by the Lov\'asz Local Lemma, with positive probability, for every $v\in V(H)$ and every $1\leq i\leq t$ we have $N^+_H(v)\cap S_i\neq \emptyset$. We now prove that if this holds, then $H$ contains a cycle of length at least $d^{2/3}/(400\log d)$. Define a sequence of vertices $v_1,v_2,\dots$ recursively as follows. Choose $v_1\in S_1$ arbitrarily. Suppose we have already defined $v_1,\dots,v_i$. Let $v_i\in S_j$. If $v_{i-1},v_{i-2},\dots,v_{i-d^{1/3}}\in S_j$, then let $v_{i+1}$ be a neighbour of $v_i$ in $S_{j+1}$ (where $S_{t+1}:=S_1$). Otherwise, let $v_{i+1}$ be a neighbour of $v_i$ in $S_j$. Since $H$ does not contain a cycle of length at most $d^{1/3}$, it follows that $v_{i+1}\not \in \{v_k: i-d^{1/3}\leq k\leq i-1\}$. Let $b$ be the smallest positive integer for which there exists some $a<b$ with $v_a=v_b$. It is not hard to see that $b-a\geq (t-1)d^{1/3}$. Since $d$ is sufficiently large, this is at least $\frac{d^{2/3}}{400\log d}$, which completes the proof.
\end{description}
\end{proof}

\begin{lemma} \label{lemma:settovertex}
    Let $G$ be an Eulerian, connected directed graph on $n$ vertices. Assume that $G$ has average degree $k$, the longest cycle in $G$ has length $t$ and that $G$ has no Eulerian subgraph with average degree greater than $d$. Let $B\subset V(G)$ be a vertex set of size $\beta n$ and let $v\in V(G)$. Then $G$ has a path of length at least $\frac{k-(1-\beta)d}{t}-1$ which starts in $B$ and ends at $v$.
\end{lemma}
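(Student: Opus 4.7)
My plan is to combine a cycle decomposition of $G$ with a layered BFS analysis from $v$. Since $G$ is Eulerian, I decompose $E(G)$ into simple directed cycles $C_1,\dots,C_s$ of length at most $t$. Those cycles contained entirely in $V(G)\setminus B$ form an Eulerian sub-digraph on $\leq (1-\beta)n$ vertices, hence carry at most $(1-\beta)nd$ edges by hypothesis. Consequently, the ``good'' cycles (those containing at least one vertex of $B$) account for at least $n(k-(1-\beta)d)$ edges, and in particular there are at least $n(L+1)$ of them, where $L=\frac{k-(1-\beta)d}{t}-1$.

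Next I would form the reverse BFS layers from $v$: let $V_i$ be the set of vertices whose shortest directed path to $v$ has length exactly $i$, and set $V_{\leq i}=\bigcup_{j\leq i}V_j$. A short calculation shows that along any directed edge the distance to $v$ can decrease by at most one, so any simple cycle of length at most $t$ is contained in at most $t$ consecutive BFS layers, and every good cycle meeting $V_{\leq i}$ lies in $V_{\leq i+t-1}$. If some $b\in B$ has shortest distance at least $L$ to $v$, then any shortest $b$-to-$v$ path is already a path of length $\geq L$ from $B$ to $v$, and we are done. Otherwise $B\subseteq V_{\leq L-1}$, so every good cycle lies in $V_{\leq L+t-2}$.

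In this remaining case I would construct the desired path greedily backwards from $v$ by absorbing good cycles. Starting with $P=(v)$, as long as the front vertex $u\notin B$ I select an unused good cycle through $u$ and prepend to $P$ the in-neighbour of $u$ on that cycle, making sure this vertex is not already in $V(P)$. The bound $|V(P)|\leq L+1$ throughout, together with the $n(L+1)$ lower bound on the number of good cycles, should force by a double-counting argument that at each step such a ``safe'' cycle choice is available, until either $u$ enters $B$ or $|P|$ reaches $L$. The hardest step will be making this safe-extension claim rigorous: at the current front vertex $u$, I need to show that there exists a good cycle through $u$ whose in-neighbour of $u$ is not already in $V(P)$, which requires a careful accounting of good-cycle incidences at $V(P)$ relative to the at most $L+1$ predecessors already blocked by the current path.
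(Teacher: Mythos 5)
Your reduction to ``good'' cycles is correct: the cycles of the decomposition avoiding $B$ form an Eulerian subgraph on at most $(1-\beta)n$ vertices, hence carry at most $(1-\beta)nd$ edges, so the good cycles carry at least $n(k-(1-\beta)d)$ edges and number at least $n(L+1)$. The BFS observation is also correct. But the entire burden of the proof falls on the final greedy step, and that step has two genuine gaps. First, the safe-extension claim is a \emph{local} statement at the current front vertex $u$, and the \emph{global} count of $n(L+1)$ good cycles gives no control over it: the number of decomposition cycles through $u$ is exactly $d^+(u)$, which may be $1$, and the unique in-neighbour of $u$ on that single cycle may already lie on $P$ (the path can wind back near itself). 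No double-counting of good-cycle incidences can rescue this, because all $n(L+1)$ good cycles may live far from $u$. Second, your termination condition does not prove the lemma: if the front vertex enters $B$ after only $j<L$ prepending steps, you have produced a path from $B$ to $v$ of length $j$, whereas the statement requires a path that \emph{both} starts in $B$ \emph{and} has length at least $L$; continuing past $B$ offers no guarantee of ever meeting $B$ again. The BFS layering is used only to dispose of the easy case where some vertex of $B$ is already at distance at least $L$ from $v$, and is then disconnected from the rest of the argument.

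The paper's proof avoids both difficulties by not extending a single path edge by edge. After reversing all arrows (so that one seeks a path from $v$ to $B$), it iteratively peels the graph: from the Eulerian graph $G_i$, it removes, in each connected component, all cycles of a cycle decomposition through one designated entry vertex, together with those entry vertices, to obtain $G_{i+1}$. Concatenating the connector paths between successive entry vertices shows that every vertex surviving in $G_i$ is the endpoint of a path of length at least $i$ from $v$, while each round deletes at most $n$ cycles and hence at most $nt$ edges. The process can only stop once $B\cap V(G_p)=\emptyset$, at which point $G_p$ is an Eulerian subgraph on at most $(1-\beta)n$ vertices with at most $(1-\beta)nd$ edges, forcing $p\geq (k-(1-\beta)d)/t$ and hence a path of length at least $p-1$ from $v$ to the last surviving vertex of $B$. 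If you want to keep your framework, you need a mechanism of this kind that certifies that $B$ is \emph{far} from $v$, rather than a greedy walk that may die or may reach $B$ too soon.
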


\begin{proof}
    For convenience, we will instead prove the equivalent statement that $G$ has a path of length at least $\frac{k-(1-\beta)d}{t}-1$ which starts at $v$ and ends in $B$. (The equivalence can be seen by reversing each arrow of $G$.)

    Let $G_0=G$, $S_0=V(G)$ and $x_0=v$. Since $G_0$ is Eulerian, we can decompose its edge set to a set of cycles. Take one such decomposition and remove the edges of those cycles which go through $x_0$. Remove, in addition, vertex $x_0$. Call the resulting graph $G_1$.
    
    Let $S_{1,1},S_{1,2},\dots,S_{1,q_1}$ be the connected components of $G_1$. For every $1\leq j\leq q_1$, choose a path in $G_0$ which starts at $x_0$ and whose only vertex in $S_{1,j}$ is the endvertex of the path. Call this endvertex $v_{1,j}$. $G_1$ is Eulerian, so there exists a decomposition of its edges into cycles. Take one such decomposition and remove all edges of the cycles which go through any of $v_{1,1},\dots,v_{1,q_1}$. Remove, in addition, the vertices $v_{1,1},\dots,v_{1,q_1}$. Call the resulting graph $G_2$.    Let $S_{2,1},S_{2,2},\dots,S_{2,q_2}$ be the connected components of $G_2$. For every $1\leq j\leq q_2$, choose $j'$ such that $S_{2,j}\subset S_{1,j'}$. Since $G_1[S_{1,j'}]$ is connected and contains $v_{1,j'}$, there exists a path in $G_1$ which starts at $v_{1,j'}$ and whose only vertex in $S_{2,j}$ is its endvertex. Call this endvertex $v_{2,j}$.    Repeat the above procedure to get graphs $G_3,G_4,\dots$ and stop when we get a graph $G_p$ with $B\cap V(G_p)=\emptyset$.
    
    Observe first that for any $i$ and any $y\in V(G_i)$, there exists a path of length at least $i$ from $x_0$ to $y$ in $G$. Thus, since $B\cap V(G_{p-1})\neq \emptyset$, in order to prove the lemma we just need to show that $p-1\geq \frac{k-(1-\beta)d}{t}-1$.   Since $B\cap V(G_p)=\emptyset$, we have $|V(G_p)|\leq (1-\beta)n$. But $G_p$ is an Eulerian subgraph of $G$, so it has average degree at most $d$. Hence,
    \begin{equation}
        e(G_p)\leq (1-\beta)nd. \label{eqn:edgelowerbound}
    \end{equation}
    
    On the other hand, observe that for every $i$, we have $e(G_{i+1})\geq e(G_{i})-nt$. Indeed, to get $G_{i+1}$ from $G_{i}$, we remove at most $n$ cycles (we remove at most $|S_{i,j}|$ cycles through $v_{i,j}$ in each connected component $S_{i,j}$ of $G_i$). But each cycle of $G$ has length at most $t$, so we remove at most $nt$ edges in total. Thus, $e(G_p)\geq e(G_0)-pnt=e(G)-pnt= nk-pnt$. Comparing this to equation (\ref{eqn:edgelowerbound}), we get $(1-\beta)nd\geq nk-pnt$, so $p\geq \frac{k-(1-\beta)d}{t}$.
\end{proof}

The next result will be used to prove Lemma \ref{lemma:findgoodpath}.

\begin{lemma} \label{lemma:highlyconnected}
   Let $G$ be an Eulerian, $d$-full, connected digraph on $n\geq 2$ vertices. Let $v\in V(G)$ and let $S\subset V(G)$ be a set of size $s\leq d^{1/2}\gamma^{-5}$ containing $v$. Suppose that $G$ has no path of length at least $\phi(d)$ starting at $v$. Then $G-S$ is strongly connected.
\end{lemma}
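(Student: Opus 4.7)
Assume for contradiction that $G-S$ is not strongly connected, and fix a partition $V(G)\setminus S=X\sqcup Y$ with both parts nonempty and no $X\to Y$ edge in $G$ (take $X$ the forward closure in $G-S$ of any vertex unable to reach another). By Lemma~\ref{lemma:shortcycle} every cycle of $G$ has length at most $2\phi(d)$. Decompose $E(G)$ into directed cycles and let $G'$ be the Eulerian subgraph obtained by discarding every cycle meeting $S$; then $S$ is isolated in $G'$, and balancing in-/out-degrees on $X$ in the Eulerian $G'$ together with the lack of $X\to Y$ edges in $G$ forces $G'$ to have no $Y\to X$ edges either, so $G'=G'[X]\sqcup G'[Y]$. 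At most $\sum_{w\in S}d_G(w)\le sn$ cycles are deleted, each of length at most $2\phi(d)$, giving $e(G')\ge(n-1)d-2\phi(d)sn\ge(n-1)d-2cnd/\gamma^{4}$.

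Let $T$ be a non-trivial component of $G'$ of maximum average degree $d_T$. Averaging yields $d_T\ge d(1-1/n)-2\phi(d)s$, and $d$-fullness forces $d_T<d$, so Lemma~\ref{lemma:simpleineq} gives $\phi(d)-\phi(d_T)=O(c^{2}d^{1/2-3\eps})$. The inductive hypothesis applied to $T$ (with $d_0=\min(d_T,d-1)$, handled via the usual $d\mapsto d-1$ adjustment when $d_T>d-1$) produces, from any vertex $t\in V(T)$, a path in $T$ of length at least $\phi(d_T)$. It therefore suffices to build a path $Q$ in $G$ from $v$ to some $t\in V(T)$, internally disjoint from $V(T)$, of length at least $\phi(d)-\phi(d_T)$: concatenating $Q$ with the inductive $T$-path from $t$ then yields a path of length at least $\phi(d)$ from $v$, contradicting the hypothesis.

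To construct $Q$, I take a second non-trivial component $R$ of $G'$ chosen densest on the opposite side of $X\sqcup Y$ from $T$. Picking $T,R$ densest on their respective sides and using $d_T|X|+d_R|Y|\ge e(G')$ with $|X|+|Y|\le n-s$ gives $d_T+d_R\ge d-O(cd/\gamma^{4})$; subadditivity of $\phi$ (from concavity and $\phi(0)=0$) combined with Lemma~\ref{lemma:simpleineq} then implies $\phi(d_R)\ge\phi(d)-\phi(d_T)-O(c^{2}d^{1/2-3\eps})$. I let $Q$ be the concatenation of a short $v\to r_{1}\in V(R)$ path in $G$, the inductive $R$-internal path of length at least $\phi(d_R)$ from $r_{1}$ to some $r_{2}\in V(R)$, and a short $r_{2}\to t\in V(T)$ path in $G$; the connectors exist by strong connectivity of $G$ and can be truncated to meet $V(R)\cup V(T)$ only at their endpoints. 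Lower-bounding connector lengths via Lemma~\ref{lemma:settovertex} applied to $G$ with $B=V(R)$ and then $B=V(T)$, and using that $n_T,n_R\ge d_T,d_R+1=\Omega(d)$ (an Eulerian digraph of average degree $\delta$ has more than $\delta$ vertices), the connectors absorb the remaining $O(c^{2}d^{1/2-3\eps})$ shortfall, and $|Q|\ge\phi(d)-\phi(d_T)$ follows.

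The main obstacle is the degenerate cases. If all non-trivial components of $G'$ lie on one side of $X\sqcup Y$, a direct computation from $d_T|X|\ge e(G')$ already forces $d_T\ge d-O(cd/\gamma^{4})$, so $\phi(d)-\phi(d_T)=O(c^{2}d^{1/2-3\eps})$, and a path $Q$ of this length is supplied by Lemma~\ref{lemma:settovertex} applied to $G$ with $B=V(T)$. When $n$ is so large compared to $d$ that Lemma~\ref{lemma:settovertex} gives only weak bounds on the connectors, I instead invoke Lemma~\ref{lemma:bigdeg}, which guarantees $d_G(v)\ge n/\gamma^{2}\gg|S|$: the huge out-neighbourhood of $v$ must meet a dense component of $G'$ in many vertices, supplying both short connectors and extra length needed to close the gap. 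The remaining bookkeeping is to ensure the connector paths stay internally disjoint from $V(T)\cup V(R)$, handled by truncating at first entry into that set.
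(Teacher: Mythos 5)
Your overall skeleton (delete cycles through $S$, observe that the Eulerian remainder $G'$ splits across the two sides, find dense components and route a path through them) matches the paper, but the routing step has a genuine gap. Your plan is to concatenate a connector $v\to r_1\in R$, an inductive path of length $\phi(d_R)$ from $r_1$ to some $r_2$ inside $R$, a connector $r_2\to t\in V(T)$, and an inductive path of length $\phi(d_T)$ from $t$. The inductive hypothesis only lets you prescribe the \emph{starting} vertex of a long path, not its endpoint, so $r_2$ is an uncontrolled vertex of $R$. A path in $G$ from $r_2$ to $V(T)$, truncated at its first entry into $V(T)$, may still wander back through many vertices of $R$ — including vertices already used on the inductive $R$-path — so you cannot guarantee a connector from $r_2$ that is internally disjoint from what you have built. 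The standard fix (take the segment after the last visit to $V(R)$) produces a connector starting at some other vertex $x\in V(R)$, not at $r_2$. This is exactly why the paper does \emph{not} chain two inductive paths: it uses Lemma \ref{lemma:settovertex} inside the first component $W'$, which yields a path with a \emph{prescribed endpoint} $w$ (the entrance of the connector $P_2$), of length only about $\frac{1}{8c}d^{1/2}\gamma^{-3}$ — short compared to $\phi(d)$, but long enough to cover the deficit $\phi(d)-\phi(d')\le 4c^2d^{1/2}\gamma^{-3}$ — and then applies induction only in the second component $U$, where prescribing the start suffices.

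A second, related gap is your treatment of the connectors themselves. Applying Lemma \ref{lemma:settovertex} to $G$ with $B=V(R)$ or $B=V(T)$ gives length about $\frac{\beta d}{2\phi(d)}-1$ with $\beta=|B|/n$, which is vacuous when the component is small relative to $n$ (e.g.\ $|V(T)|=\Theta(d)$, $n=d^{100}$); yet the deficit you must cover, $\Theta(c^2d^{1/2}\gamma^{-3})$, is a growing function of $d$, so a short or zero-length connector does not suffice. Your fallback via Lemma \ref{lemma:bigdeg} is not carried out: $|N^+_G(v)|\ge \gamma^{-2}n$ guarantees that \emph{some} component of $G'$ contains a $\Omega(\gamma^{-2})$-fraction of outneighbours of $v$, but that component need not be your densest component $T$ (or $R$), and it need not be dense enough for induction to recover $\phi(d)$ on its own. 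The paper resolves this by deliberately using two \emph{different} components with different roles — $W'$, chosen so that $N^+_G(v)\cap W'$ is a $\frac12\gamma^{-2}$-fraction of $W'$ (so Lemma \ref{lemma:settovertex} applies inside $W'$ with $B=N^+_G(v)\cap W'$ and the first connector is a single edge), and a second dense component $U$ obtained from the acyclic structure of the strongly connected components of $G-S$ — joined by a connector that exists because $G-v$ is strongly connected (Lemma \ref{lemma:oneconnected}). To repair your argument you would need to import both of these devices; as written, the proof does not go through.
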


In order to prove this lemma, we first need to establish the following weaker version.

\begin{lemma} \label{lemma:oneconnected}
   Let $G$ be an Eulerian, $d$-full, connected digraph on $n\geq 2$ vertices. Let $v\in V(G)$. Suppose that $G$ has no path of length at least $\phi(d)$ starting at $v$. Then $G-v$ is strongly connected.
\end{lemma}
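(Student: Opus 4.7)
I plan to argue by contradiction. Suppose that $G-v$ is not strongly connected. Then, taking $A$ to be the union of the source strong components of $G-v$ and $B=V(G)\setminus(A\cup\{v\})$, I obtain a partition of $V(G)\setminus\{v\}$ into two nonempty parts with no edges from $B$ to $A$ in $G$. By Lemma \ref{lemma:shortcycle}, every directed cycle of $G$ has length at most $\phi(d)+1\le 2\phi(d)$.

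Next I fix any cycle decomposition of $E(G)$ and delete all cycles passing through $v$; let $G'$ be the resulting Eulerian subgraph, in which $v$ is isolated. Since any cycle avoiding $v$ cannot use a $B\to A$ edge, every such cycle lies entirely in $A$ or entirely in $B$, so $G'-v$ splits as $G'_A\sqcup G'_B$, both Eulerian. Because I removed at most $d_v\cdot 2\phi(d)\le 2(n-1)\phi(d)$ edges, the $d$-fullness of $G$ gives
$$e(G'-v)\ge (n-1)\bigl(d-2\phi(d)\bigr).$$
Let $T$ be the connected component of $G'-v$ of maximum average degree $d'$. Averaging then yields $d'\ge d-2\phi(d)$, while $d$-fullness applied to the proper Eulerian subgraph $T$ forces $d'<d(1-1/|V(T)|)$; combining, $|V(T)|\ge d/(2\phi(d))=d^{1/2-\eps}/(2c)\ge 2$ for $c$ small. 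The induction hypothesis applied to $T$ (with Lemma \ref{lemma:simpleineq} handling the borderline $d'>d-1$) then produces, from any prescribed vertex of $T$, a directed path of length at least $\phi(d')$ inside $T$, and Lemma \ref{lemma:simpleineq} also gives $\phi(d)-\phi(d')\le 4c^2\gamma^2$.

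What remains is to exhibit a directed path in $G$ from $v$ to some $w\in V(T)$, meeting $V(T)$ only at $w$, of length at least $\phi(d)-\phi(d')$; splicing such a connector with the induction-supplied path inside $T$ from $w$ gives a path from $v$ of length at least $\phi(d)$, a contradiction. This connecting step is the main obstacle. Mirroring the plan sketched for Lemma \ref{lemma:highlyconnected} in Section~2, I intend to route the connector through another connected component $R\ne T$ of $G'-v$ of comparably high average degree: enter $R$ from $v$, traverse a long path inside $R$ by the induction hypothesis, and then exit $R$ and arrive in $T$. Existence of a suitable $R$ will follow from an averaging argument when $|V(T)|$ is not too large a fraction of $n-1$ (the remaining components then carry $\Omega((n-1)d)$ edges on $\Omega(n-1)$ vertices, so the next densest component has average degree close to $d$); in the complementary regime, where $V(T)$ already occupies a positive fraction of $V(G)$, I will instead apply Lemma \ref{lemma:settovertex} with the set $V(T)$ and the bound $t\le 2\phi(d)$ directly to $G$, obtaining a path from $v$ into $V(T)$ of length at least roughly $|V(T)|d/(2n\phi(d))$, which comfortably exceeds $\phi(d)-\phi(d')$. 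The most delicate point will be arranging the connector so that it meets $V(T)$ only at its endpoint, without reusing any vertex of the induction path inside $T$; I expect this to require careful use of the edges lying in the deleted $v$-cycles, since these are precisely the edges of $G$ missing from $G'$ that can bridge the components of $G'-v$.
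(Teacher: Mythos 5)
Your setup (remove the cycles through $v$ from a cycle decomposition, locate a dense component $T$ of the remainder, apply induction inside $T$) matches the first half of the paper's argument, and your accounting is right: the densest component is only guaranteed average degree $d-2\phi(d)$, so the deficit $\phi(d)-\phi(d')$ can be as large as $4c^2\gamma^2=4c^2d^{2\eps}$, which grows with $d$. The genuine gap is that neither of your two proposed mechanisms for building a connector of that length from $v$ to $T$ works. In plan (i), the long segment inside the auxiliary component $R$ must have \emph{both} endpoints controlled: it has to begin where you enter $R$ from $v$ and end at a vertex from which you can leave $R$ towards $T$. The induction hypothesis only prescribes the starting vertex of a path, so after traversing the induction path in $R$ you are stranded at an uncontrolled vertex with no way to reach $T$ without revisiting used vertices; moreover you have no guarantee that $G-v$ even contains a path from your $R$ to your $T$, since these are just two dense components with no control over their position in the condensation of $G-v$. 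In plan (ii), Lemma \ref{lemma:settovertex} yields a path from $v$ that \emph{ends} in $V(T)$ but may visit $V(T)$ many times beforehand; truncating at the first visit destroys the length bound, and without truncation you cannot splice in the induction path, which may occupy any vertices of $T$.

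The paper circumvents exactly this difficulty by a different choice of components and a dichotomy that your proposal is missing. It takes $T$ and $R$ to be the source and sink of a maximal path in the acyclic condensation of $G-v$; $d$-fullness then shows (via the claim $e(G'[T])\geq |T|d-2\phi(d)f$, where $f$ is the number of edges from $v$ into $T$) that both endpoints retain almost all their edges, and there is a guaranteed $T\to R$ path in $G-v$. It then splits on $f$. If $f\leq\gamma^{-2}|T|$, the component loses only $2\phi(d)f$ edges, the deficit drops below $1$, and a trivial connector suffices. If $f\geq\gamma^{-2}|T|$, then $v$ has a $\gamma^{-2}$-fraction of $T$ as outneighbours, and this large starting set is precisely what makes Lemma \ref{lemma:settovertex} applicable \emph{inside} a sub-component $W'\subseteq T$: it produces a path of length at least $\frac{1}{8c}d^{1/2}\gamma^{-3}\gg 4c^2\gamma^2$ from $N_G^+(v)\cap W'$ to a prescribed exit vertex, from which a minimal path reaches $R'\subseteq R$, where the induction path is appended as the final (and only endpoint-uncontrolled) segment. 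Both the source/sink choice and the case split on $f$ are essential, and your argument would need to be reworked along these lines to close the connecting step.
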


\begin{proof}

By Lemma \ref{lemma:shortcycle}, $G$  contains no cycle of length at least $\phi(d)+1$. Now define an auxiliary directed graph $K$ as follows. Note that the vertex set of every directed graph can be partitioned into maximal subsets in a unique way such that each of the sets induces a strongly connected subgraph, call these the \emph{strongly connected components} of the digraph. Let the vertices of $K$ be the strongly connected components of $G-v$, and put a directed edge from component $T$ to component $R$ in $K$ if there is an edge from $T$ to $R$ in $G-v$. Note that $K$ is acyclic, otherwise the union of the components forming a cycle is a strongly connected subset of $G-v$.

Assume that $G-v$ is not strongly connected. We claim that there is no isolated vertex in $K$. Indeed, if there was one, say component $T$ of $G-v$, then any in- or outneighbour (in $G$) of any vertex in $T$ is in the set $T\cup \{v\}$. Thus, the subgraph $G[T\cup \{v\}]$ is Eulerian. Since it has at least two vertices but $T\cup \{v\}\neq V(G)$, by the $d$-fullness of $G$ we have $e(G[T\cup \{v\}])< |T|d$. Similarly, $G[V(G)\setminus T]$ is Eulerian, so $e(G[V(G)\setminus T])< (n-|T|-1)d$. Hence, $e(G)=e(G[T\cup \{v\}])+e(G[V(G)\setminus T])<(n-1)d$, which is a contradiction. Thus, $K$ contains no isolated vertex, so it is non-empty. Choose a path in $K$ of maximal length and let its endpoints be $T$ and $R$. Since $K$ is acyclic, $T$ has no incoming edge and $R$ has no outgoing edge in $K$.

Since $G$ is Eulerian, we can partition its edge set into cycles. Remove all cycles passing through $v$ to get the graph $G'$.

\medskip

\noindent \textbf{Claim.} Let $f$ be the number of edges in $G$ which start at $v$ and end in $T$. Then $e(G'[T])\geq |T|d-2\phi(d)f$.

\medskip

\noindent \textbf{Proof of Claim.} Consider that same partition of $E(G)$ into cycles that was used to define $G'$. Remove only those cycles which go through $v$ and intersect $T$. Call the resulting graph $G''$. Clearly $G''[T]=G'[T]$. Since $T$ has no inneighbour in $K$, it follows that any inneighbour of any vertex in $T$ is in $T\cup \{v\}$. So any cycle in $G$ through $v$ which intersects $T$ must contain an edge from $v$ to $T$. Hence, there are at most $f$ such edge-disjoint cycles. Since any cycle in $G$ has length at most $2\phi(d)$, we have $e(G'')\geq e(G)-2\phi(d)f$. Moreover, there is no edge from $V(G)\setminus T$ to $T$ in $G''$, and $G''$ is Eulerian, so there is no edge from $T$ to $V(G)\setminus T$ either. Assume that $e(G''[T])<|T|d-2\phi(d)f$. Then 
\begin{align*}
e(G''[V(G)\setminus T])&=e(G'')-e(G''[T])> e(G)-2\phi(d)f-(|T|d-2\phi(d)f)\\
&=e(G)-|T|d\geq (n-1)d-|T|d=(n-|T|-1)d.
\end{align*}
But $G''[V(G)\setminus T]$ is a proper Eulerian subgraph of $G$ with at least two vertices, so this contradicts that $G$ is $d$-full. $\Box$

\medskip

By the claim, if $f\leq \gamma^{-2}|T|$, then $e(G'[T])\geq |T|d-2cd^{1/2}\gamma^{-1}|T|$. So $G'[T]$ has average degree at least $d-2cd^{1/2}\gamma^{-1}$, hence it has a connected component $T'$ such that $G'[T']$ has average degree at least $d-2cd^{1/2}\gamma^{-1}$. Since $G$ is strongly connected, there exists a path from $v$ to $T'$ in $G$ which only intersects $T'$ in the endvertex $w$. Also, by our induction hypothesis, it follows that there exists a path in $G'[T']$ of length at least $\phi(d-2cd^{1/2}\gamma^{-1})$ starting at $w$. By Lemma \ref{lemma:simpleineq}, $\phi(d-2cd^{1/2}\gamma^{-1})\geq \phi(d)-4c^2$, which is at least $\phi(d)-1$ for $c<1/2$. Combining this with the path from $v$ to $w$, we get a path in $G$ of length at least $\phi(d)$ starting at $v$, which is a contradiction. 

So we may assume that $f\geq \gamma^{-2}|T|$. However, $f\leq |T|$, so the claim gives $e(G'[T])\geq |T|d-2\phi(d)|T|$. Similarly as in the above claim, we have $e(G'[R])\geq |R|d-2\phi(d)|R|$. Choose a connected component $R'$ of $G'[R]$ such that $G'[R']$ has average degree at least $d-2\phi(d)$. Let $W$ be the union of those connected components of $G'[T]$ in which the proportion of vertices that are outneighbours of $v$ is at least $\frac{1}{2}\gamma^{-2}$. Since $|N_G^+(v)\cap T|\geq \gamma^{-2}|T|$, we have $|W|\geq \gamma^{-2}|T|/2$. Note that $e(G'[T\setminus W])\leq (|T|-|W|)d$, so 
\begin{align*}
    e(G'[W])&=e(G'[T])-e(G'[T\setminus W])\geq |T|d-2\phi(d)|T|-(|T|-|W|)d\\
    &=|W|d-2\phi(d)|T|\geq |W|d-4\phi(d)\gamma^{2}|W|.
\end{align*} Hence, there exists a connected component $W'\subset W$ of $G'$ such that $G'[W']$ has average degree at least $d-4\phi(d)\gamma^{2}$ and $|N_G^+(v)\cap W'|\geq \frac{1}{2}\gamma^{-2}|W'|$. Let $B=N^+_G(v)\cap W'$.

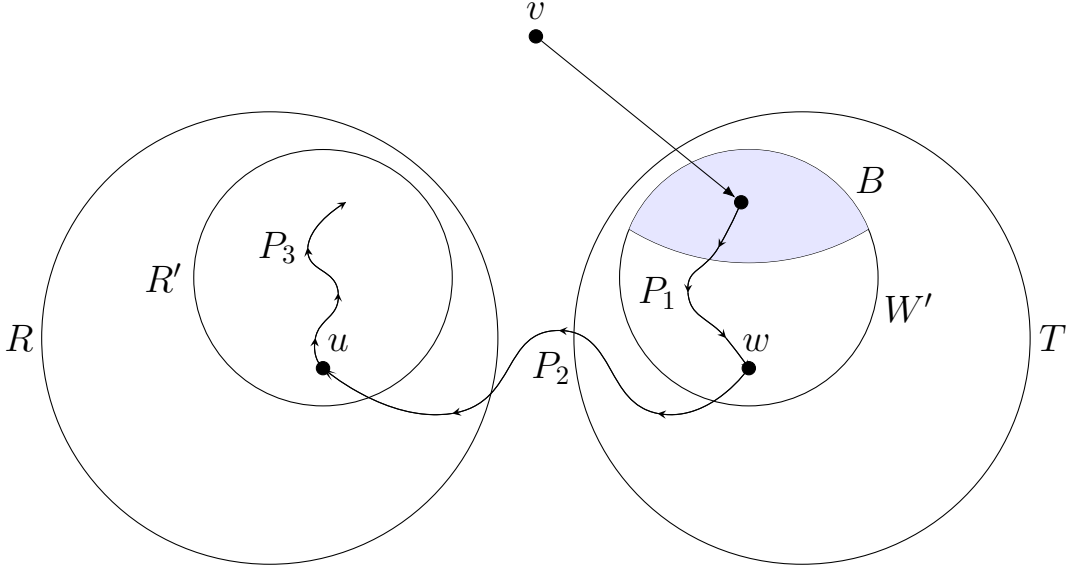
\begin{figure}
\centering
\begin{tikzpicture}

		\node[vertex, label=above:{\Large $v$}] (v) at (0,4) {};
		\draw (3.5,0) circle (3); \node at (6.8,0) {\Large $T$};
		\draw (2.8,0.8) circle (1.7);
		\node at (4.9,0.4) {\Large $W'$}; 
		\node[vertex, label={[shift={(0.1,0)}]:{\Large $w$}}] (w) at (2.8,-0.4) {};
		
		\draw (-3.5,0) circle (3); \node at (-6.8,0) {\Large $R$};
		\draw (-2.8,0.8) circle (1.7);
		\node at (-4.9,0.8) {\Large $R'$};
		\node[vertex, label={[shift={(0.2,0)}]:{\Large $u$}}] (u) at (-2.8,-0.4) {};

		\begin{scope}
			\clip  (2.8,0.8) circle (1.7);
			\draw  (2.8,4) circle (3);
			\fill[fill=blue!10!white] (2.8,4) circle (3);
		\end{scope}
		\node at (4.4,2.1) {\Large $B$}; 
		\node[vertex] (x) at (2.7,1.8) {};
		\draw[-{Latex}] (v) edge (x);

	\begin{scope}
		\draw plot [smooth,tension=1]
		coordinates {(x) (2.4,1.2) (2,0.6) (2.5,0.0) (w)}
		[postaction={on each segment={draw,-{stealth[bend]}}}];
		
		\draw plot [smooth,tension=1]
		coordinates {(w) (1.6,-1) (0.3,0.1) (-1.1,-1) (u)}
		[postaction={on each segment={draw,-{stealth[bend]}}}];
		
		\draw plot [smooth,tension=1]
		coordinates {(u)(-2.9,0) (-2.6,0.6)  (-3,1.2)  (-2.5,1.8)}
		[postaction={on each segment={draw,-{stealth[bend]}}}];
	\end{scope}

	\node at (1.6,0.6) {\Large $P_1$};
	\node at (0.2,-0.4) {\Large $P_2$};
	\node at (-3.4,1.2) {\Large $P_3$};

\end{tikzpicture}
\caption{An illustration for the case $f\geq \gamma^{-2}|T|$ in the proof of Lemma \ref{lemma:oneconnected}}
\label{figure1}
\end{figure}

There exists a path in $K$ from $T$ to $R$, so there also exists a path in $G-v$ from $T$ to $R$. Moreover, $T$ and $R$ are strongly connected components in $G-v$ and $W'\subset T$, $R'\subset R$, so there exists a path from $W'$ to $R'$ in $G-v$. Choose such a path $P_2$ of minimal length; then it only intersects $W'$ in the first vertex $w$ and only intersects $R'$ in the last vertex $u$. By Lemma \ref{lemma:settovertex}, $G'[W']$ has a path $P_1$ of length at least 
$$\frac{d-4\phi(d)\gamma^{2}-(1-\frac{1}{2}\gamma^{-2})d}{2\phi(d)}-1=
\frac{\frac{1}{2}d\gamma^{-2}-4\phi(d)\gamma^{2}}{2\phi(d)}-1\geq
\frac{1}{8c}d^{1/2}\gamma^{-3}$$ 
from $B$ to $w$. Finally, there exists a path $P_3$ in $G'[R']$ of length at least $\phi(d-2\phi(d))$ starting at $u$. Since every element of $B$ is an outneighbour of $v$ in $G$, it follows that $G$ has a path of length at least $|P_1|+|P_2|+|P_3|$ starting at $v$. But $|P_1|\geq \frac{1}{8c}d^{1/2}\gamma^{-3}$ and by Lemma~\ref{lemma:simpleineq}, $|P_3|\geq \phi(d-2\phi(d))\geq \phi(d)-4c^2\gamma^{2}$, so $|P_1|+|P_3|\geq \phi(d)$, which is a contradiction. See Figure \ref{figure1} for an illustration of the proof of this case.
\end{proof}

\begin{proof}[Proof of Lemma \ref{lemma:highlyconnected}.]

By Lemma \ref{lemma:shortcycle}, every cycle in $G$ has length at most $\phi(d)+1\leq 2\phi(d)$, and by Lemma \ref{lemma:bigdeg}, the degree of $v$ is at  least $\gamma^{-2}n$. Partition the edge set of $G$ into cycles and remove those which intersect $S$. Remove also the vertices in $S$. Let the resulting graph be $G'$. Since any removed cycle contains an edge incident to $S$ and any cycle has length at most $2\phi(d)$, we have $$e(G')\geq e(G)-|S|n\cdot 2\phi(d)\geq (n-1)d-2cnd\gamma^{-4}.$$

Let $W$ be the union of those connected components of $G'$ in which the proportion of vertices that are outneighbours of $v$ is at least $\gamma^{-2}/2$. Since $|N_G^+(v)|\geq \gamma^{-2}n$, we have $|W|\geq \gamma^{-2}n/2$. Note that $e(G'[V(G')\setminus W])\leq (n-1-|W|)d$, so 
\begin{align*}
e(G'[W])&=e(G')-e(G'[V(G')\setminus W])\geq (n-1)d-2cnd\gamma^{-4}-(n-1-|W|)d\\
&=|W|d-2cnd\gamma^{-4}\geq |W|d-4c|W|d\gamma^{-2}.
\end{align*}

Hence, there exists a connected component $W'\subset W$ of $G'$ such that $G'[W']$ has average degree at least $d-4cd\gamma^{-2}$ and $|N_G^+(v)\cap W'|\geq \frac{1}{2}\gamma^{-2}|W'|$.

Define an auxiliary directed graph $K$ as follows. The vertices of $K$ are the strongly connected components of $G-S$ and there is a directed edge from $T$ to $R$ in $K$ if there is an edge from $T$ to $R$ in $G-S$. Similarly as in Lemma \ref{lemma:oneconnected}, $K$ is acyclic. Assume that $G-S$ is not strongly connected. Then $K$ has at least two vertices, so since $K$ is acyclic, $K$ has at least two vertices which do not have both an incoming and an outgoing edge. We claim that if $T$ is a strongly connected component of $G-S$ which has no incoming (resp. outgoing) edge, then $e(G')\geq |T|d-2\phi(d)|T|\cdot|S|$. The proof of this is very similar to the proof of the claim from Lemma \ref{lemma:oneconnected}, so it is omitted. Since $G'[T]$ has average degree at least $d-2\phi(d)|S|$, it has a connected component of average degree at least $d-2\phi(d)|S|$.

Thus, $G'$ has at least two connected components with average degree at least $d-2\phi(d)|S|$. Choose such a component $U$ which is distinct from $W'$. By Lemma \ref{lemma:oneconnected}, the graph $G-v$ is strongly connected, so there exists a path from $W'$ to $U$ in $G-v$ which only meets $W'$ and $U$ at the first and last vertex, respectively. Call these vertices $w$ and $u$. Let $B=N^+_G(v)\cap W'$. By Lemma \ref{lemma:settovertex}, $G'[W']$ contains a path of length at least $$\frac{d-4cd\gamma^{-2}-(1-\frac{1}{2}\gamma^{-2})d}{2\phi(d)}-1\geq \frac{1}{8c}d^{1/2}\gamma^{-3}$$ which starts in $B$ and ends at $w$. Moreover, since $G'[U]$ has average degree at least $d-2\phi(d)|S|\geq d-2cd\gamma^{-4}$, it contains a path of length at least $\phi(d-2cd\gamma^{-4})$ starting at $u$. It follows, similarly as in Lemma \ref{lemma:oneconnected}, that $G$ has a path of length at least $$\frac{1}{8c}d^{1/2}\gamma^{-3}+\phi(d-2cd\gamma^{-4})$$ starting at $v$. But by Lemma \ref{lemma:simpleineq}, $$\phi(d-2cd\gamma^{-4})\geq \phi(d)-4c^2d^{1/2}\gamma^{-3}\geq \phi(d)-\frac{1}{8c}d^{1/2}\gamma^{-3},$$ which is a contradiction.
\end{proof}

\section{The proofs of our key lemmas}

\begin{proof}[Proof of Lemma \ref{lemma:findgoodpath}]

Since $c$ is sufficiently small and it is assumed that there is no path of length at least $\phi(d)$, $d$ is bounded from below by a sufficiently large constant.
Let $v_0=v$, and let $p=d^{1/2}\gamma^{-5}$. Define $v_1,v_2,\dots,v_{p}$ recursively as follows. For $1\leq i\leq p$, let $v_i$ be a uniformly random element from $N^+_G(v_{i-1})\setminus \{v_0,\dots,v_{i-2}\}$. This is well-defined since if $N^+_G(v_{i-1})\subset \{v_0,\dots,v_{i-2}\}$, then $G-\{v_0,\dots,v_{i-2}\}$ is not strongly connected, which contradicts Lemma~\ref{lemma:highlyconnected}.

Let $P$ be the path $v_0v_1\dots v_{p}$. Property \ref{plong} is clear, and property \ref{pconnected} follows from Lemma \ref{lemma:highlyconnected}. It remains to prove that property \ref{prich} holds with positive probability. Let $Y=\{y_1,\dots,y_{\ell}\}$ be the set of vertices of $P$ which have degree at least $\gamma^{-2}n$ such that $y_i$ appears before $y_j$ on $P$ for $i<j$. Call a vertex $y_i\in Y$ \emph{dangerous} if there exists no index $a$ such that $i-\gamma^{9}\leq a<i$ and the number of $b$ satisfying $i<b\leq i+\gamma^{9}$ and $|N^+_{G}(y_a)\cap N^+_{G}(y_b)|\geq \frac{1}{100}\gamma^{-4}n$ is at least $4\gamma^{4.5}$.

\medskip

\noindent \textbf{Claim.} In any ``interval'' $\{y_i,y_{i+1},\dots,y_{i+\gamma^{9}}\}$, the number of dangerous vertices is at most~$\gamma^{7}$.

\medskip

\noindent \textbf{Proof of Claim.} Suppose that there exist $J\subset [i, i+\gamma^9]$ such that $|J|=\gamma^{7}$, $y_j$ is dangerous for $j\in J$, and $\max J-\min J\leq\gamma^{9}$.  Note that $|N_G^+(y)|\geq \gamma^{-2}n$ for any $y\in Y$. Hence, among any $10\gamma^{2}$ vertices of $Y$, there exist distinct $z$ and $z'$ with $|N^+_{G}(z)\cap N^+_{G}(z')|\geq \frac{1}{100}\gamma^{-4}n$. Indeed, otherwise the union of the out-neighbourhoods of those $10\gamma^{2}$ vertices would have size at least 
$$10\gamma^{2}\cdot \gamma^{-2}n-\binom{10\gamma^{2}}{2}\cdot \frac{1}{100}\gamma^{-4}n>n,$$ which is a contradiction. It follows by Tur\'an's theorem that in any subset of $Y$ of size $t\geq 100\gamma^{2}$, the number of pairs $(z,z')$ with $|N^+_{G}(z)\cap N^+_{G}(z')|\geq \frac{1}{100}\gamma^{-4}n$ is at least $\frac{1}{1000}\gamma^{-2}t^2$. Applying this to $\{y_{j}:j\in J\}$, we get that in this set there are at least $\frac{1}{1000}\gamma^{12}$ pairs $(y_j,y_{j'})$ with 
$j<j'$ such that  $|N^+_{G}(y_{j})\cap N^+_{G}(y_{j'})|\geq \frac{1}{100}\gamma^{-4}n$. Define an auxiliary directed graph $K$ whose vertex set is $\{y_j:j\in J\}$, and in which $y_{j}\rightarrow y_{j'}$ is an edge if and only if $j<j'$ and $|N^+_{G}(y_{j})\cap N^+_{G}(y_{j'})|\geq \frac{1}{100}\gamma^{-4}n$. Then $K$ has at least $\frac{1}{1000}\gamma^{12}$ edges. So $K$ has a vertex $y_{j}$ with outdegree at least $\frac{1}{1000}\gamma^{5}$ in $K$. It is easy to see that this means that $y_{j^{*}}$ is not dangerous, where $j^{*}>j$ is the smallest index such that $j^{*}\in J$. $\Box$

\medskip

By splitting the set of indices into disjoint intervals of length $\gamma^9$,  it follows from the claim that the number of dangerous vertices in $Y$ is at most $\gamma^{-2}|Y|+\gamma^{7}$.  Call the pair $(i,j)$ with $0\leq j<i\leq p$ bad if there are at least $\gamma^{4.5}$ values $k>i$ such that $|N_G^+(v_j)\cap N_G^+(v_k)|\geq \frac{1}{100}\gamma^{-4}n$, and moreover for the $\gamma^{4.5}$ smallest such values of $k$ we have $v_k^+\not \in N_G^+(v_j)$.

Note that $P$ has length $p=d^{1/2}\gamma^{-5}\leq \frac{1}{200}\gamma^{-4}n$. Thus, for any $j<k$ with $|N_G^+(v_j)\cap N_G^+(v_k)|\geq \frac{1}{100}\gamma^{-4}n$, we have that 
$|N_G^+(v_j)\cap N_G^+(v_k)\setminus P|\geq \frac{1}{200}\gamma^{-4}n$ and hence
the probability (conditional on the choices for $v_j$ and $v_k$) that $v_k^+\in N_G^+(v_j)$ is at least $\frac{1}{200}\gamma^{-4}$. Thus, for any fixed $(i,j)$, the probability that $(i,j)$ is bad is exponentially small in $\gamma^{0.5}$.

By the union bound, if $d$ is sufficiently large, then with positive probability no pair is bad. However, if this holds and $y_i\in Y$ is a vertex that is not dangerous, then $G$ has an edge of the form $y_jy_{j'}^+$ with $i-\gamma^{9}\leq j<i<j'\leq i+\gamma^{9}$. There are at most $\gamma^{-2}|Y|+\gamma^{7}$ dangerous vertices in $Y$, so the proof of property~\ref{prich} is complete.
\end{proof}

We now turn to the proof of Lemma \ref{lemma:finishproof}. We will use the following result.

\begin{lemma} \label{lemma:y'}
    Let $G$ be a directed graph and let $P$ be a path of length $p$ starting at $v$ and ending at $v'$ with property \ref{prich} from Lemma \ref{lemma:findgoodpath}. Then there exists a subset $Y'\subset Y$ with $|Y\setminus Y'|\leq \gamma^{-2}|Y|+\gamma^{7}+6\gamma^{11}$ such that the following statements hold.
    
    \begin{enumerate}[label=(\alph*)]
        \item Let $S\subset Y'$ be a set of size less than $\gamma^{2}$ such that for any two distinct $y_i,y_j\in S$, we have $|i-j|>3\gamma^{9}$. Then $G$ has a path of length at least $p/2$ starting at $v$ and ending at $v'$ which only uses vertices from $V(P)\setminus S$.
        
        \item Let $z\in V(P)$ be a vertex between $y_h$ and $y_{h+1}$ on $P$. (If $z$ is before $y_1$, we take $h=0$ and if $z$ is after $y_{\ell}$, we take $h=\ell$.) Let $S\subset Y'$ be a set of size less than $\gamma^{2}$ such that for any $y_i\in Y'$ we have $|i-h|>\gamma^{9}$ and for any two distinct $y_i,y_j\in S$, we have $|i-j|>3\gamma^{9}$. Then $G$ has a path starting at $v$ and ending at $z$ which only uses vertices from $V(P)\setminus S$.
    \end{enumerate}
\end{lemma}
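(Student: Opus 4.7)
The plan is to define $Y'$ by removing from $Y$, in addition to the dangerous vertices already excluded by property \ref{prich}, those $y_i$ that are too close to the ends of the $Y$-list, together with those whose surrounding $Y$-window sits in a long stretch of $P$. Concretely, write $f(i)$ for the position of $y_i$ on $P$, call $y_i$ \emph{narrow} if $f(i+\gamma^{9})-f(i-\gamma^{9}) \le p/(2\gamma^{2})$, and set $Y'$ to be the collection of non-dangerous narrow $y_i$ with $\gamma^{9} < i \le \ell-\gamma^{9}$. The quantitative input is a double-counting argument: each $P$-edge is contained in at most $2\gamma^{9}$ of the intervals $[f(i-\gamma^{9}),f(i+\gamma^{9}))$, so $\sum_i (f(i+\gamma^{9})-f(i-\gamma^{9})) \le 2\gamma^{9} p$, and therefore at most $4\gamma^{11}$ indices $i$ fail narrowness. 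Adding the $2\gamma^{9}$ boundary exclusions and the $\gamma^{-2}|Y|+\gamma^{7}$ dangerous vertices from \ref{prich} gives $|Y\setminus Y'|\le \gamma^{-2}|Y|+\gamma^{7}+6\gamma^{11}$ once $d$ is large enough.

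For part (a), given $S=\{y_{i_1},\dots,y_{i_k}\}$ with $i_1<\dots<i_k$, $k<\gamma^{2}$, and $i_{t+1}-i_t > 3\gamma^{9}$, I would invoke property \ref{prich} at each $y_{i_t}\in Y'$ to obtain an edge $y_{j_t}y_{j_t'}^{+}$ with $j_t,j_t'\in [i_t-\gamma^{9},i_t+\gamma^{9}]$. Traversing $P$ from $v$ to $v'$ while replacing each sub-path from $y_{j_t}$ to $y_{j_t'}^{+}$ by this single edge produces a walk; the separation $i_{t+1}-i_t > 2\gamma^{9}$ forces the $Y$-intervals $[j_t,j_t'+1]$ to be pairwise disjoint, hence the replaced $P$-segments are vertex-disjoint, so the walk is actually a path, and each $y_{i_t}$ is bypassed inside a skipped segment. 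Narrowness of $y_{i_t}$ bounds the length loss at shortcut $t$ by $f(j_t')-f(j_t)\le p/(2\gamma^{2})$; summed over $k<\gamma^{2}$ shortcuts, the total loss is less than $p/2$, so the resulting path has length at least $p/2$.

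For part (b), I would apply the same construction, but only to the initial segment of $P$ from $v$ to $z$, reading the hypothesis ``$|i-h|>\gamma^{9}$'' as applying to the members of $S$ (the apparent $y_i\in Y'$ in the statement seems to be a typo for $y_i\in S$). Each $y_{i_t}\in S$ then satisfies either $i_t<h-\gamma^{9}$, in which case both shortcut endpoints have $Y$-index at most $h$ and so lie on $P$ strictly before $z$, or $i_t>h+\gamma^{9}$, in which case $y_{i_t}$ itself sits past $y_{h+1}$ and hence past $z$ on $P$. Applying the shortcuts only for the first type --- the second type is automatically avoided since those vertices do not appear on the $v$-to-$z$ sub-path at all --- yields a path from $v$ to $z$ with vertex set in $V(P)\setminus S$. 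The principal difficulty of the whole argument is calibrating the narrowness threshold: it must be small enough that $\gamma^{2}$ shortcuts together cost at most $p/2$, yet large enough that double-counting discards only $O(\gamma^{11})$ indices. The choice $p/(2\gamma^{2})$ is precisely the balancing point, and the rest of the verification is routine.
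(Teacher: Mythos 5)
Your proposal is correct and follows essentially the same route as the paper: your ``narrow'' vertices are exactly the complement of the paper's ``crucial'' ones (defined via the gap lengths $t_j$ rather than positions $f(i)$), the double-counting bound of $O(\gamma^{11})$ exceptional indices is the same, and the shortcut construction for (a) and (b) — including the observation that in (b) only the $y_{i}\in S$ with index below $h$ need to be bypassed — matches the paper's argument. The extra boundary exclusion and the slightly different constants are harmless.
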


\begin{proof}
Let $t_0$ be the length of the subpath of $P$ from $v$ to $y_1$; for $1\leq i\leq \ell-1$, let $t_i$ be the length of the subpath of $P$ from $y_i$ to $y_{i+1}$; and let $t_{\ell}$ be the length of the subpath of $P$ from $y_{\ell}$ to $v'$.

Call $y_i$ \emph{crucial} if $\sum_{j:|j-i|\leq \gamma^{9}} t_j\geq \frac{p}{2\gamma^{2}}$. Let $y_{i_1},y_{i_2},\dots,y_{i_k}$ be the full list of crucial vertices. Then
$$\sum_{a=1}^k \sum_{j:|j-i_a|\leq \gamma^{9}} t_j \geq k\frac{p}{2\gamma^{2}}.$$
However, the left hand side is at most $(2\gamma^{9}+1)\sum_{j=0}^{\ell} t_j=(2\gamma^{9}+1)p$. Thus, $k\leq 2\gamma^{2}\cdot (2\gamma^{9}+1)\leq 6\gamma^{11}$.

Let $Y'$ be the subset of $Y$ consisting of those vertices $y_i$ that are not crucial and for which there exist $i-\gamma^{9}\leq j<i<j'\leq i+\gamma^{9}$ such that $y_jy_{j'}^+\in E(G)$. By property \ref{prich}, we have $|Y\setminus Y'|\leq \gamma^{-2}|Y|+\gamma^{7}+6\gamma^{11}$. It remains to prove (a) and (b).

\begin{enumerate}[label=(\alph*)]
    \item We will use the path $P$ with small modifications to make sure that we avoid the vertices in $S$. Let $S=\{y_{i_1},\dots,y_{i_s}\}$ where $i_1<i_2<\dots<i_s$. By the definition of $Y'$, for every $1\leq \alpha\leq s$, $G$ has an edge of the form $y_{j_{\alpha}}y_{j_{\alpha}'}^+$ for some $i_{\alpha}-\gamma^{9}\leq j_{\alpha}<i_{\alpha}<j_{\alpha}'\leq i_{\alpha}+\gamma^{9}$. So we can define the path $P'$ which is $P$ but with the subpath between each $y_{j_{\alpha}}$ and $y_{j_{\alpha}'}^+$ replaced by a direct edge. This is a valid path from $v$ to $v'$ since $y_{i_{{\alpha}+1}}-y_{i_{\alpha}}>3\gamma^{9}$ for every $\alpha$. It remains to verify that $P'$ has length at least $p/2$. Note that the length of $P'$ is precisely $p-\sum_{{\alpha}=1}^s \sum_{\beta=j_{\alpha}}^{j'_{\alpha}-1} t_{\beta}$. Since $y_{i_{\alpha}}\in S\subset Y'$, $y_{i_{\alpha}}$ is not crucial, so $$\sum_{\beta=j_{\alpha}}^{j_{\alpha}'} t_{\beta}\leq \sum_{\beta:|\beta- i_{\alpha}|\leq \gamma^{9}} t_{\beta}\leq \frac{p}{2\gamma^{2}}.$$ Since $s\leq \gamma^{2}$, $P'$ indeed has length at least $p/2$.
    
    \item The proof of this is very similar to that of (a). For any $y_i\in S$ with $i\leq h$, we just take an edge $y_jy_{j'}^+$ in $G$ with $i-\gamma^{9}\leq j<i<j'\leq i+\gamma^{9}$ and replace the path between $y_j$ and $y_{j'}^+$ with an edge to avoid $y_i$.
\end{enumerate}
\end{proof}

We are now ready to prove Lemma \ref{lemma:finishproof}.

\begin{proof}[Proof of Lemma \ref{lemma:finishproof}]

Since $c$ is small, we can assume that $d$ is sufficiently large. Suppose, for contradiction, that $G$ has no path of length at least $\phi(d)$ starting at $v$. Let $v'$ be the endpoint of $P$. Choose $Y'\subset Y$ provided by Lemma \ref{lemma:y'}. Note that $n\geq d^{10}$, otherwise $G$ contains a path of length $\phi(d)$ by Lemma~\ref{lemma:fewvertices} and Lemma \ref{lemma:shortcycle}.

Now we shall define several (not necessarily Eulerian) digraphs on vertex set $V(G)$. Call a path whose endpoints are on $P$ \emph{problematic} if its length is at most $d^{1/2}\gamma$ but it intersects $V(P)$ in more than $\gamma^{2}$ vertices. Let us keep removing problematic paths from $G$ as long as the remaining graph still has a problematic path. The process stops eventually; let $H_0$ be the resulting subgraph of $G$. Note that each removed path contains at least $\gamma^{2}$ edges incident to a vertex of $P$. There are at most $2pn$ such edges, so we have removed at most $2pnd^{1/2}\gamma^{-1}$ edges, i.e. $e(H_0)\geq e(G)-2pnd^{1/2}\gamma^{-1}$. Moreover, we removed at most $2pn\gamma^{-2}$ paths, so $\sum_{y\in Y'} |d^+_{H_0}(y)-d^-_{H_0}(y)|\leq 4pn\gamma^{-2}$. Also, $d^+_{H_0}(u)=d^-_{H_0}(u)$ for any $u\in V(G)\setminus V(P)$.

Now we define the digraphs $H_1,H_2,\dots$ recursively as follows. Assume we have already defined $H_i$. Then, if there exist distinct vertices $u,w\in V(G)\setminus V(P)$ such that $|N^+_{H_i}(u)\cap N^-_{H_i}(w)\cap Y'|\geq 7\gamma^{11}$ and $uw\not \in E(H_i)$, then define $H_{i+1}$ by adding the new edge $uw$ to $H_i$ and removing the edges $uy$ and $yw$ for some $y\in N^+_{H_i}(u)\cap N^-_{H_i}(w)\cap Y'$. If such vertices $u$ and $w$ do not exist, then terminate the algorithm and set $H=H_i$. Note that $\sum_{y\in Y'} |d^+_{H}(y)-d^-_{H}(y)|=\sum_{y\in Y'} |d^+_{H_0}(y)-d^-_{H_0}(y)|\leq 4pn\gamma^{-2}$ and $d^+_{H}(u)=d^-_{H}(u)$ for any $u\in V(G)\setminus V(P)$.
Moreover, this process has at most $pn$ steps, as there are at most $2pn$ edges touching the vertices of path $P$ and we remove two of them when going from $H_i$ to $H_{i+1}$.

We claim that there are at most $O(pn\gamma^{-2})$ edges in $H$ incident to a vertex of $P$. Indeed, any vertex in $V(P)\setminus Y$ has in- and outdegree at most $\gamma^{-2}n$ in $G$, and consequently also in $H$. Moreover, the number of vertices in $Y\setminus Y'$ is at most $\gamma^{-2}|Y|+\gamma^{7}+6\gamma^{11}\leq \gamma^{-2}p+7\gamma^{11}\leq 8\gamma^{-2}p$, where the last inequality follows from $p\geq d^{1/2}\gamma^{-5}$ and $\gamma^{11}\leq d^{1/2}\gamma^{-7}$. Hence, it suffices to prove that the number of edges in $H$ incident to a vertex in $Y'$ is at most $O(pn\gamma^{-2})$. Assume that this number is at least $24pn\gamma^{-2}$. Suppose that $\sum_{y\in Y'} d^+_{H}(y)\geq 12pn\gamma^{-2}$ (the case $\sum_{y\in Y'} d^-_{H}(y)\geq 12pn\gamma^{-2}$ is very similar). By $\sum_{y\in Y'} |d^+_{H}(y)-d^-_{H}(y)|\leq 4pn\gamma^{-2}$, we have $$\sum_{y\in Y': d^+_{H}(y)>2d^-_{H}(y)} d^+_{H}(y)\leq 8pn\gamma^{-2},$$
so
\begin{equation*}
    \sum_{y\in Y': d^+_{H}(y)\leq 2d^-_{H'}(y)} d^+_{H}(y)\geq 4pn\gamma^{-2}. \label{eqn:outdegs}
\end{equation*}
Hence, the number of walks $uyw$ in $H$ of length $2$ with $y\in Y'$ is
\begin{align*}
    \sum_{y\in Y'} d_H^-(y)d_H^+(y)
    &\geq \sum_{y\in Y': d_H^+(y)\leq 2d_H^-(y)} \frac{1}{2}d_H^+(y)^2\\
    &\geq \frac{1}{2}\cdot \frac{1}{|Y'|} \left(\sum_{y\in Y': d_H^+(y)\leq 2d_H^-(y)} d_H^+(y)\right)^2 \\
    &\geq \frac{1}{2p}(4pn\gamma^{-2})^2=8pn^2\gamma^{-4},
\end{align*}
where the second inequality follows from the Cauchy-Schwarz inequality. The number of walks $uyw$ in $H$ with $y\in Y'$ and $|N^+_{H}(u)\cap N^-_{H}(w)\cap Y'|<7\gamma^{11}$ is at most $n^2\cdot 7\gamma^{11}\leq 7pn^2\gamma^{-4}$ (since $p\geq d^{1/2}\gamma^{-5}$ and $\gamma^{20}= d^{1/2}$). So there exist at least $pn^2\gamma^{-4}$ walks $uyw$ in $H$ with $y\in Y'$ and $|N^+_{H}(u)\cap N^-_{H}(w)\cap Y'|\geq 7\gamma^{11}$, and hence, as $|Y'|\leq p$, there exist at least $n^2\gamma^{-4}$ pairs $(u,w)$ with $|N^+_{H}(u)\cap N^-_{H}(w)\cap Y'|\geq 7\gamma^{11}$. However, the number of pairs $(u,w)$ with $u\in V(P)$ or $w\in V(P)$ is at most $2pn$, and the number of pairs $(u,w)$ with $uw\in E(H)$ is at most $e(H)\leq nd$, so using $n\geq d^{10}$ it follows that there exist distinct $u,w\in V(G)\setminus V(P)$ such that $|N^+_{H}(u)\cap N^-_{H}(w)\cap Y'|\geq 7\gamma^{11}$ and $uw\not \in E(H)$. This contradicts the definition of~$H$.

Thus, there are indeed at most $O(pn\gamma^{-2})$ edges in $H$ incident to a vertex in $P$. Assume that $H$ has a path $Q$ of length $\phi(d)$ which starts at a vertex $z\in V(P)$ but contains no other vertex in $V(P)$. Let $u_1w_1,u_2w_2,\dots,u_sw_s$ be those edges of $Q$ which are not edges in $G$, taken in their natural order. Then for every $\alpha$, $|N^+_{H_0}(u_{\alpha})\cap N^-_{H_0}(w_{\alpha})\cap Y'|\geq 7\gamma^{11}$. Suppose that $s>\gamma^{2}$. Then it is easy to find greedily a path in $H_0$ of length at most $d^{1/2}\gamma$ which starts and ends in $V(P)$ and intersects $V(P)$ in more than $\gamma^{2}$ vertices. But this means that this path is problematic, contradicting the definition of $H_0$. Suppose now that $s\leq \gamma^{2}$. Choose $h$ such that $z$ is between $y_h$ and $y_{h+1}$ on $P$ (if $z$ is after $y_\ell$, then let $h=\ell$ and if $z$ is before $y_1$, then let $h=0$). Using $|N^+_{H_0}(u_{\alpha})\cap N^-_{H_0}(w_{\alpha})\cap Y'|\geq 7\gamma^{11}$, we may greedily choose $y_{i_{\alpha}}\in N^+_{H_0}(u_{\alpha})\cap N^-_{H_0}(w_{\alpha})\cap Y'$ for every $1\leq \alpha\leq s$ such that $|i_{\alpha}-h|>\gamma^{9}$ for every $1\leq \alpha\leq s$ and $|i_{\alpha}-i_{\beta}|>3\gamma^{9}$ for any two distinct $1\leq \alpha,\beta\leq s$. Then, by (b) in Lemma \ref{lemma:y'}, there exists a path from $v$ to $z$ which only uses vertices from $V(P)\setminus \{y_{i_1},\dots,y_{i_s}\}$. Concatenating this path with the path obtained from $Q$ by replacing the edges $u_{\alpha}w_{\alpha}$ with the paths $u_{\alpha}y_{i_{\alpha}}w_{\alpha}$, we obtain a path in $G$ of length at least $\phi(d)$ starting at $v$. Hence, we may assume that $H$ has no path of length $\phi(d)$ which starts at a vertex $z\in V(P)$ but contains no other vertex in $V(P)$.

Since $|N^+_{H}(u)|=|N^-_{H}(u)|$ for any $u\in V(H)\setminus V(P)$, we can partition the edge set of $H$ into a collection of two kinds of subgraphs: paths which start and end in $V(P)$ but have no internal vertices in $V(P)$, and cycles. Remove all edges from these paths and remove also all edges in cycles which intersect $V(P)$. Call the resulting graph $K$. The edge set of $K$ is a disjoint union of cycles, so $K$ is Eulerian. Since $H$ has no path of length $\phi(d)$ which starts at a vertex in $V(P)$ but contains no other vertex in $V(P)$, it follows that the number of removed edges from $H$ is at most $\phi(d)$ times the number of removed edges incident to $V(P)$. However, $H$ has at most $O(pn\gamma^{-2})$ edges incident to $V(P)$. Hence, $e(K)\geq e(H)-O(pn\gamma^{-2})\cdot \phi(d)\geq e(H)-O(pnd^{1/2}\gamma^{-1})$. Moreover, $e(H_{i+1})=e(H_i)-1$ for every $i$, so $e(H)\geq e(H_0)-pn$ (since as we explained above, the process has at most $pn$ steps). Together with $e(H_0)\geq e(G)-2pnd^{1/2}\gamma^{-1}$, we get $e(K)\geq e(G)-O(pnd^{1/2}\gamma^{-1})$. Since $e(G)\geq (n-1)d$, we have $e(K)\geq n(d-O(pd^{1/2}\gamma^{-1}))$. It follows that $K$ has a connected component $W$ such that $K[W]$ has average degree at least $d-Cpd^{1/2}\gamma^{-1}$ for some absolute constant $C$. As all vertices on $P$ are isolated in $K$, we have $W\subset V(G)\setminus V(P)$. Since $G-(V(P)\setminus \{v'\})$ is strongly connected, there exists a path $Q$ in $G-(V(P)\setminus \{v'\})$ from $v'$ to $W$ which only meets $W$ at its endpoint $z$. By induction, $K[W]$ has a path $R$ of length $\phi(d-Cpd^{1/2}\gamma^{-1})$ starting at $z$.

Note that $R$ is not necessarily a subgraph of $G$ since $K$ contains edges which are not present in $G$. However, we can turn $R$ into a path in $G$ with length at least $\phi(d-Cpd^{1/2}\gamma^{-1})$ as follows. Let $u_1w_1,u_2w_2,\dots,u_s w_s$ be those edges of $R$ which are not edges in $G$, taken in their natural order. Then for every $\alpha$, $|N^+_{H_0}(u_{\alpha})\cap N^-_{H_0}(w_{\alpha})\cap Y'|\geq 7\gamma^{11}$. Suppose that $s>\gamma^{2}$. Then (as above) it is easy to find a path in $H_0$ of length at most $d^{1/2}\gamma$ which starts and ends in $V(P)$ and intersects $V(P)$ in more than $\gamma^{2}$ vertices. But this means that this path is problematic, contradicting the definition of $H_0$. Suppose now that $s\leq \gamma^{2}$. Using $|N^+_{H_0}(u_{\alpha})\cap N^-_{H_0}(w_{\alpha})\cap Y'|\geq 7\gamma^{11}$, we may greedily choose $y_{i_{\alpha}}\in N^+_{H_0}(u_{\alpha})\cap N^-_{H_0}(w_{\alpha})\cap Y'$ for every $1\leq \alpha\leq s$ such that $|i_{\alpha}-i_{\beta}|>3\gamma^{9}$ for any two distinct $1\leq \alpha,\beta\leq s$. Let $R'$ be the path obtained from $R$ by replacing each $u_{\alpha}w_{\alpha}$ with the path $u_{\alpha}y_{i_{\alpha}}w_{\alpha}$.

By (a) in Lemma \ref{lemma:y'}, $G$ has a path of length at least $p/2$ starting at $v$ and ending at $v'$ which only uses vertices from $V(P)\setminus \{y_{i_1},y_{i_2},\dots,y_{i_s}\}$. Concatenating this path with $Q$ (which goes from $v'$ to $z$ and is internally disjoint from both $V(P)$ and $W$) and $R'$, we get a path in $G$ of length at least $p/2+\phi(d-Cpd^{1/2}\gamma^{-1})$ starting at $v$.
But by Lemma \ref{lemma:simpleineq},
$$\phi(d-Cpd^{1/2}\gamma^{-1})
    \geq \phi(d)-2cCp \geq \phi(d)-p/2$$
if $c$ is sufficiently small. This completes the proof.
\end{proof}

\section{Concluding remarks}

In this paper we proved that if $G$ is an Eulerian directed graph with average degree $d$, then it contains a path of length at least $cd^{1/2+\eps}$ for some absolute constant $\eps>0$ (we can take $\eps=1/40$). It would be interesting to extend this result to show that one can also find a cycle of length at least $cd^{1/2+\eps}$.

We also mention the following stronger version of Conjecture \ref{con:longcycle}.

\begin{conjecture}[Bollob\'as--Scott \cite{BS96}]
    Let $G$ be an Eulerian directed graph on $n$ vertices. Then the edge set of $G$ can be partitioned into $O(n)$ cycles.
\end{conjecture}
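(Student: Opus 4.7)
The plan is to attack the conjecture by iteratively extracting long cycles. Let $G$ be Eulerian with $m$ edges on $n$ non-isolated vertices, so average degree $d = 2m/n$. If one could always find a directed cycle of length $\Omega(d)$ in an Eulerian digraph of average degree $d$, then removing such a cycle from $G$ leaves an Eulerian subgraph with $m - \Omega(d)$ edges, and iterating yields a cycle decomposition of size $O(n \log n)$ (not quite $O(n)$, because the average degree may decrease by only a factor of $1 - \Theta(1/n)$ per round, giving a geometric rather than linear decay of the edge count). To close the logarithmic gap, one would further need either (i) to extract a batch of $O(1)$ cycles per round covering a constant fraction of the remaining edges, combined with showing that the non-isolated vertex set shrinks geometrically, or (ii) to guarantee in a single sweep a collection of $O(n)$ cycles covering all but $O(n)$ remaining edges and then handle the low-density remainder separately.

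The main obstacle is that already the first step of this scheme, finding a cycle of length $\Omega(d)$ in every Eulerian digraph of average degree $d$, is precisely the cycle version of Conjecture \ref{con:longcycle} and remains wide open. The present paper breaks the $\sqrt{d}$ barrier only for paths, not cycles; plugging in the best known cycle bound of $\Omega(\sqrt{d}\,)$ into the above iteration yields a decomposition of size $O(n\sqrt{d}\,\log n)$, which is far from $O(n)$. Hence any resolution of the $O(n)$-cycles conjecture via the iterative route would in particular imply a positive answer to the cycle version of Bollob\'as--Scott.

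A more flexible alternative is a local-move argument on the space of cycle decompositions. Start with any cycle partition of $E(G)$ and repeatedly apply the following operation: whenever two cycles $C_1$ and $C_2$ in the current decomposition share a vertex $v$, swap the transitions at $v$, i.e.\ exchange the out-edges that follow the incoming edges of $C_1$ and $C_2$ at $v$. If $C_1$ and $C_2$ share only the vertex $v$, this swap merges them into a single simple cycle on $V(C_1) \cup V(C_2)$, reducing the count by one. The hope would be that a ``locally minimal'' decomposition consists of cycles that are pairwise nearly vertex-disjoint, and hence contains at most $O(n)$ of them. The hard part is that when $C_1$ and $C_2$ overlap on many vertices, the swap at $v$ produces a closed trail that may re-decompose into several simple cycles in complicated ways; there is no evident monovariant to enforce progress, and one needs a real structural argument bounding the pairwise overlap pattern of a minimal decomposition. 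Without a substantially new insight into the structure of Eulerian digraphs, I expect neither the iterative nor the amortization approach to go through, and the conjecture should be viewed as genuinely open and of comparable difficulty to the cycle version of Bollob\'as--Scott itself.
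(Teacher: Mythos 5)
You are right not to offer a proof here: this statement is presented in the paper purely as an open conjecture of Bollob\'as and Scott, with no proof given or claimed, so there is nothing to compare your argument against. Your assessment matches the paper's: the conjecture is a strengthening of Conjecture \ref{con:longcycle} (a partition into $O(n)$ cycles forces one cycle of length $\Omega(d)$ by averaging, so it implies the long-cycle conjecture outright, not only via your iterative route), and the paper notes it is open even in the undirected setting, where the best known bound is a partition into $O(n\log\log n)$ cycles and edges due to Conlon, Fox and Sudakov. Your quantitative remarks are essentially sound: iterating a hypothetical $\Omega(d)$-cycle extraction does give $O(n\log n)$ cycles, and iterating the known $\Omega(\sqrt{d})$ bound gives roughly $O(n\sqrt{d})$ (your extra $\log n$ factor is not needed, but the conclusion that this is far from $O(n)$ stands). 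The local transition-swapping idea is a reasonable heuristic, and your identification of the obstruction --- that merging two cycles sharing many vertices yields a closed trail with no evident monovariant --- is exactly where such arguments stall. In short: correctly identified as open; do not attempt to supply a proof.
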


The difficulty of this conjecture is perhaps reflected by the fact that it is open even for simple graphs.

\begin{conjecture}[Erd\H os--Gallai \cite{Erd83,EGP66}]
    Let $G$ be a graph on $n$ vertices. Then the edge set of $G$ can be partitioned into $O(n)$ cycles and edges.
\end{conjecture}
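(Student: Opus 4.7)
The plan is to attack this by strong induction, reducing to graphs of large minimum degree and then greedily peeling off long cycles. First I would observe that connected components may be handled independently, and that if $G$ has a vertex of degree at most some constant $C$, one can delete that vertex and add its at most $C$ incident edges as singletons to the partition; iterating this reduction contributes $O(n)$ parts in total and leaves a connected graph with $\delta(G) > C$. At this stage $e(G) \geq \tfrac{C}{2} n$, and by a classical depth-first-search argument going back to Erd\H os and Gallai, $G$ contains a cycle of length at least $\delta(G)+1 = \Omega(e(G)/n)$.

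Next I would extract such a long cycle, use it as a single part of the partition, and recurse on $G$ with its edges removed. Let $T(n,m)$ denote the worst-case number of parts needed on an $n$-vertex, $m$-edge graph. The above yields the heuristic recursion
\[
T(n,m) \;\leq\; 1 + T\!\bigl(n,\; m - \Omega(m/n)\bigr),
\]
provided the low-degree reduction can be re-applied cheaply between successive cycle extractions. Iterating gives $T(n,m) = O(n\log n)$, which recovers Pyber's classical bound on this conjecture but falls short of $O(n)$. A refined version---replacing a single long cycle by a near-spanning collection of short cycles extracted simultaneously via a dependent-random-choice step---can be pushed to $O(n\log\log n)$, matching the current world record of Conlon, Fox and Sudakov.

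The main obstacle, and the reason this conjecture has remained open for decades, is that a single long-cycle extraction can drop the degree at many vertices simultaneously and in a correlated way, triggering a cascade of newly low-degree vertices that spends the singleton budget faster than the logarithmic decay of $m$ pays for them. Closing the residual $\log\log n$ factor, let alone reaching $O(n)$, appears to require a genuinely new idea: for instance a global decomposition of $G$ into an expander-like part, from which one extracts a single near-spanning cycle accounting for essentially all of its edges, plus a forest-like remainder handled by $O(n)$ singleton edges. I do not see how to implement such a dichotomy with the techniques of the present paper, so I would realistically aim for the $O(n\log\log n)$ bound via the strategy outlined above and leave the exact $O(n)$ statement as a conjecture.
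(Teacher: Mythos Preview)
Your assessment is accurate: the statement is a famous open conjecture, and the paper does not prove it. It appears in the concluding remarks as motivation, with the paper only noting that the best known bound is the $O(n\log\log n)$ result of Conlon, Fox and Sudakov. Your sketch correctly recovers Pyber's $O(n\log n)$ bound via iterated long-cycle extraction, correctly identifies the Conlon--Fox--Sudakov improvement as the current record, and correctly diagnoses why the argument stalls short of $O(n)$. There is nothing to compare against in the paper, and your decision to leave the exact $O(n)$ statement as a conjecture is the right one.
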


The best bound for the last question is due to Conlon, Fox and Sudakov \cite{CFS14}, who showed that if $G$ is a graph on $n$ vertices, then the edge set of $G$ can be partitioned into $O(n\log \log n)$ cycles and edges.

\bigskip

\noindent
\textbf{Acknowledgements.} Oliver Janzer is supported by an ETH Zurich Postdoctoral Fellowship 20-1 FEL-35'. Benny Sudakov and Istv\'an Tomon are supported by the SNSF grant 200021\_196965. Istv\'an Tomon is also partially supported by the  Russian Government in the framework of MegaGrant no 075-15-2019-1926, and MIPT Moscow.


\begin{thebibliography}{}

\bibitem{BS96} B. Bollob\'as and A. D. Scott. A proof of a conjecture of Bondy concerning paths in weighted
digraphs, J. Combin. Theory, Ser. B, 66(2):283–292, 1996.

\bibitem{CFS14} D. Conlon, J. Fox and B. Sudakov. Cycle packing, Random Structures and Algorithms 45 (2014), 608--626.

\bibitem{Erd83} P. Erd\H os. On some of my conjectures in number theory and combinatorics, Proceedings of the
fourteenth Southeastern conference on combinatorics, graph theory and computing (Boca Raton,
Fla., 1983), Congr. Numer. 39 (1983), 3–19.

\bibitem{EG59} P. Erd\H os, T. Gallai. On maximal paths and circuits of graphs, Acta Math.
Acad. Sci. Hungar. 10 (1959), 337--356.

\bibitem{EGP66} P. Erd\H os, A. W. Goodman and L. P\'osa, The representation of a graph by set intersections, Canad.
J. Math. 18 (1966), 106–112.

\bibitem{FS75} R. J. Faudree and R. H. Schelp. Path Ramsey numbers in multicolorings, J.
Combin. Theory Ser. B 19 (1975), no. 2, 150–160.

\bibitem{HMSSY} H. Huang, J. Ma, A. Shapira, B. Sudakov, and R. Yuster. Large feedback arc sets, high minimum degree subgraphs, and long cycles in Eulerian digraphs. Combinatorics, Probability
and Computing, 22(6):859–873, 2013.

\bibitem{KLMN19} C. Knierim, M. Larcher, A. Martinsson, A. Noever.
Long Cycles, Heavy Cycles and Cycle Decompositions in Digraphs, arXiv:1911.07778.

\bibitem{Ko77} G. N. Kopylov. Maximal paths and cycles in a graph, Dokl. Akad. Nauk
SSSR 234 (1977), no. 1, 19–21. (English translation: Soviet Math. Dokl. 18
(1977), no. 3, 593–596.)

\bibitem{Wo76} D. R. Woodall. Maximal circuits of graphs I, Acta Math. Acad. Sci. Hungar.
28 (1976), no. 1–2, 77–80.

\end{thebibliography}
\end{document}